\numberwithin{equation}{section}
\newtheorem{theorem}{Theorem}[section]
\newtheorem{lemma}[theorem]{Lemma}
\newtheorem{proposition}[theorem]{Proposition}
\newtheorem{corollary}[theorem]{Corollary}
\newtheoremstyle{remarkstyle}
{}{}{\itshape }{ }{\bfseries}{.}{ }{\thmname{#1}\thmnumber{ #2}\thmnote{ (#3)}}
\theoremstyle{remarkstyle}
\newtheorem{remark}{Remark}[section]
\newtheorem{definition}{Definition}[section]
\newcommand{\R}{\mathbb R}
\newcommand{\C}{\mathbb C}
\newcommand{\eps}{\epsilon}
\DeclareMathOperator*{\loc}{loc}
\DeclareMathOperator*{\inls}{(INLS)}
\DeclareMathOperator*{\gamc}{{\gamma_c}}
\DeclareMathOperator*{\ima}{Im}
\DeclareMathOperator*{\rea}{Re}
\newcommand{\scal}[1]{\left\langle #1 \right\rangle}
\title[Scattering in a weighted space for the defocusing INLS]{Scattering theory in weighted $L^2$ space for a class of the defocusing inhomogeneous nonlinear Schr\"odinger equation} 
\author[V. D. Dinh]{Van Duong Dinh}
\address[V. D. Dinh]{Laboratoire Paul Painlev\'e UMR 8524, Universit\'e de Lille CNRS, 59655 Villeneuve d'Ascq Cedex, France
	and 
	Department of Mathematics, HCMC University of Education, 280 An Duong Vuong, Ho Chi Minh, Vietnam}
\email{contact@duongdinh.com}
\keywords{Inhomogeneous nonlinear Schr\"odinger equation; Local well-posedness; Decay solutions; Virial identity; Scattering; Weighted $L^2$ space}
\subjclass[2010]{35P25, 35Q55}
\begin{document}

\maketitle

\begin{abstract}
In this paper, we consider the following inhomogeneous nonlinear Schr\"odinger equation (INLS)
\[
i\partial_t u + \Delta u + \mu |x|^{-b} |u|^\alpha u = 0, \quad (t,x)\in \R \times \R^d
\]
with $b, \alpha>0$. First, we revisit the local well-posedness in $H^1(\R^d)$ for $\inls$ of Guzm\'an [Nonlinear Anal. Real World Appl. 37 (2017), 249-286] and give an improvement of this result in the two and three spatial dimensional cases. Second, we study the decay of global solutions for the defocusing $\inls$, i.e. $\mu=-1$ when $0<\alpha<\alpha^\star$ where $\alpha^\star = \frac{4-2b}{d-2}$ for $d\geq 3$, and $\alpha^\star = \infty$ for $d=1, 2$ by assuming that the initial data belongs to the weighted $L^2$ space $\Sigma =\{u \in H^1(\R^d) : |x| u \in L^2(\R^d) \}$. Finally, we combine the local theory and the decaying property to show the scattering in $\Sigma$ for the defocusing $\inls$ in the case $\alpha_\star<\alpha<\alpha^\star$, where $\alpha_\star = \frac{4-2b}{d}$.
\end{abstract}


\section{Introduction}
\setcounter{equation}{0}
\label{S1}

One of the most important equations in nonlinear optics is the nonlinear Schr\"odinger equation (NLS). It models the propagation of intense laser beams in a homogeneous bulk medium with a Kerr nonlinearity. It is well-known that NLS governed the beam propagation in a homogeneous bulk media cannot support stable high-power propagation. It was suggested at the end of the last century that stable high-power propagation can be obtained in plasma by sending a preliminary laser beam that creates a channel with a reduced electron density, and thus reduces the nonlinear
inside the channel (see e.g. \cite{Grill, LT}). In this physical model, the beam propagation can be described by the inhomogeneous nonlinear Schr\"odinger equation of the form
\begin{align} \label{INLS-V}
i\partial_t u + \Delta u + V(x) |u|^\alpha u = 0, \quad (t,x)\in \R \times \R^d,
\end{align}
where $u$ is the electric field in laser and optics, $\alpha > 0$ is the power of nonlinear interaction, and the potential $V(x)$ is proportional to the electron density. In \cite{TM}, Towers and Malomed observed by means of variational approximation and direct simulations that for a certain type of nonlinear medium, \eqref{INLS-V} gives rise to completely stable beams.

When the potential $V$ is constant, \eqref{INLS-V} becomes the standard nonlinear Schr\"odinger equation which has been studied extensively in the past decades (see e.g. \cite{Cazenave, Tao}).

When the potential $V$ is a non-constant bounded function, Merle \cite{Merle} showed the existence and nonexistence of minimal blow-up solutions to \eqref{INLS-V} with $\alpha=\frac{4}{d}$ and $V_1\leq V(x) \leq V_2$, where $V_1$ and $V_2$ are positive constants. Later, Rapha\"el and Szeftel \cite{RS} extended the work of Merle \cite{Merle} and established sufficient conditions for the existence, uniqueness, and charaterization of minimial blow-up solutions to the equation. Fibich and Wang \cite{FW} and Liu and Wang \cite{LWW} investigated the stability and instability of solitary waves for \eqref{INLS-V} with $\alpha \geq \frac{4}{d}$ and $V(x) = V(\epsilon x)$, where $\epsilon>0$ is a small parameter and $V \in C^4(\R^d) \cap L^\infty(\R^d)$. 

When the potential $V$ is unbounded, the problem becomes more involved. The case $V(x)= |x|^b, b>0$ was studied in several works. Chen and Guo \cite{CG} and Chen \cite{Chen} proved sharp criteria for the global existence and blow-up. Zhu \cite{Zhu} studied the existence and dynamical properties of blow-up solutions. When $V$ behaves like $|x|^{-b}$ with $b>0$, Bouard and Fukuizumi \cite{BF} studied the stability of standing waves for \eqref{INLS-V} with $\alpha<\frac{4-2b}{d}$. Fukuizumi and Ohta \cite{FO} established the instability of standing waves for \eqref{INLS-V} with $\alpha>\frac{4-2b}{d}$. 

In this paper, we consider the following type of inhomogeneous nonlinear Schr\"odinger equations
\[
\left\{
\begin{array}{ccl}
i\partial_t u + \Delta u + \mu |x|^{-b} |u|^\alpha u &=& 0, \quad (t,x)\in \R \times \R^d,\\
\left.u\right|_{t=0}&=& u_0,
\end{array} 
\right. \tag{INLS}
\]
where $u: \R \times \R^d \rightarrow \C, u_0:\R^d \rightarrow \C$, $\mu = \pm 1$, $\alpha>0$, and $b>0$. The terms $\mu=1$ and $\mu=-1$ correspond to the focusing and defocusing cases respectively. This equation plays an important role as a limiting equation in the analysis of (1.1) with $V(x) \sim |x|^{-b}$ as $|x| \rightarrow \infty$ (see e.g. \cite{Genoud, Genoud-Thesis, GS}).

Before reviewing known results for $\inls$, we recall some facts for this equation.  First, we note that $\inls$ is invariant under the scaling
\[
u_\lambda(t,x):= \lambda^{\frac{2-b}{\alpha}} u(\lambda^2 t, \lambda x), \quad \lambda>0.
\]
An easy computation shows
\[
\|u_\lambda(0)\|_{\dot{H}^\gamma(\R^d)} = \lambda^{\gamma+\frac{2-b}{\alpha}-\frac{d}{2}} \|u_0\|_{\dot{H}^\gamma(\R^d)}.
\]
Thus, the critical Sobolev exponent is given by 
\begin{align}
\gamc := \frac{d}{2}-\frac{2-b}{\alpha}. \label{critical exponent}
\end{align}
Moreover, $\inls$ has the following conserved quantities:
\begin{align}
M(u(t))&:= \|u(t)\|^2_{L^2(\R^d)} = M(u_0), \label{mass conservation} \\
E(u(t)) &:=  \frac{1}{2}\|\nabla u(t)\|^2_{L^2(\R^d)} - \mu G(t) = E(u_0), \label{energy conservation} 
\end{align}
where 
\begin{align}
G(t):=\frac{1}{\alpha+2} \int_{\R^d} |x|^{-b}|u(t,x)|^{\alpha+2} dx. \label{define G}
\end{align}

The well-posedness for $\inls$  in $H^1(\R^d)$ was firstly studied by Genoud and Stuart in \cite[Appendix]{GS} (see also \cite{Genoud-Thesis}). The proof is based on the abstract theory developed by Cazenave \cite{Cazenave} which does not use Strichartz estimates. More precisely, the authors showed that the focusing $\inls$ with $0<b<\min\{2,d\}$ is well posed in $H^1(\R^d)$: 
\begin{itemize}
\item locally if $0<\alpha<\alpha^\star$,
\item globally for any initial data if $0<\alpha <\alpha_\star$,
\item globally for small initial data if $\alpha_\star \leq \alpha <\alpha^\star$.
\end{itemize}
Here $\alpha_\star$ and $\alpha^\star$ are defined by
\begin{align}
\renewcommand*{\arraystretch}{1.2}
\alpha_\star:=\frac{4-2b}{d}, \quad \alpha^\star := \left\{ \begin{array}{c l}
\frac{4-2b}{d-2} & \text{if } d\geq 3, \\
\infty &\text{if } d=1, 2.
\end{array} \right. \label{alpha exponents}
\end{align}

In the case $\alpha=\alpha_\star$ ($L^2$-critical), Genoud in \cite{Genoud} showed that the focusing $\inls$ with $0<b<\min\{2,d\}$ is globally well-posed in $H^1(\R^d)$ assuming $u_0 \in H^1(\R^d)$ and
\[
\|u_0\|_{L^2(\R^d)} <\|Q\|_{L^2(\R^d)},
\]
where $Q$ is the unique nonnegative, radially symmetric, decreasing solution of the ground state equation
\begin{align}
\Delta Q -Q +|x|^{-b}|Q|^{\frac{4-2b}{d}} Q=0. \label{ground state equation 1}
\end{align}
Also, Combet and Genoud in \cite{CG-JEE} established the classification of minimal mass blow-up solutions for the focusing $L^2$-critical $\inls$. 

In the case $\alpha_\star <\alpha<\alpha^\star$, Farah in \cite{Farah} showed that the focusing $\inls$ with $0<b<\min \{2,d\}$ is globally well-posedness in $H^1(\R^d)$ assuming $u_0 \in H^1(\R^d)$ and
\begin{align}
E(u_0)^{\gamc} M(u_0)^{1-\gamc} &< E(Q)^{\gamc} M(Q)^{1-\gamc}, \label{condition 1}\\
\|\nabla u_0\|^{\gamc}_{L^2(\R^d)} \|u_0\|_{L^2(\R^d)}^{1-\gamc} &< \|\nabla Q\|^{\gamc}_{L^2(\R^d)} \|Q\|^{1-\gamc}_{L^2(\R^d)}, \label{condition 2}
\end{align}
where $Q$ is the unique nonnegative, radially symmetric, decreasing solution of the ground state equation
\begin{align}
\Delta Q- Q + |x|^{-b} |Q|^\alpha Q =0. \label{ground state equation 2}
\end{align}
Afterwards, Farah and Guzm\'an in \cite{FG-JDE, FG-BBMS} proved that the above global solution is scattering under the radial condition of the initial data. In \cite{Farah}, Farah also proved that if $u_0 \in \Sigma$ satisfies $(\ref{condition 1})$ and 
\begin{align}
\|\nabla u_0\|^{\gamc}_{L^2(\R^d)} \|u_0\|_{L^2(\R^d)}^{1-\gamc} &> \|\nabla Q\|^{\gamc}_{L^2(\R^d)} \|Q\|^{1-\gamc}_{L^2(\R^d)},
\label{condition 3}
\end{align}
then the finite time blow-up in $H^1(\R^d)$ must occur. This result was later extended to radial data by the author in \cite{Dinh}. Note that the existence and uniqueness of nonnegative, radially symmetric, decreasing solutions to $(\ref{ground state equation 1})$ and $(\ref{ground state equation 2})$  were proved by Toland \cite{Toland} and Yanagida \cite{Yanagida} (see also Genoud and Stuart \cite{GS}). Their results hold under the assumption $0<b<\min\{2, d\}$ and $0<\alpha <\alpha^\star$. 

Recently, Guzm\'an in \cite{Guzman} used Strichartz estimates and the contraction mapping argument to establish the well-posedness for $\inls$ in Sobolev spaces. More precisely, he showed (among other things) that:
\begin{itemize}
\item if $0<\alpha<\alpha_\star$ and $0<b<\min\{2, d\}$, then $\inls$ is locally well-posed in $L^2(\R^d)$. Thus, it is globally well-posed in $L^2(\R^d)$ by mass conservation.
\item if $0<\alpha<\widetilde{\alpha}, 0<b<\widetilde{b}$ and $\max\{0, \gamc\}<\gamma \leq \min\left\{\frac{d}{2},1 \right\}$ where 
\begin{align}
\renewcommand*{\arraystretch}{1.2}
\widetilde{\alpha}:= \left\{ \begin{array}{c l}
\frac{4-2b}{d-2\gamma} & \text{if } \gamma<\frac{d}{2}, \\
\infty &\text{if } \gamma=\frac{d}{2},
\end{array} \right. \quad \text{and} \quad  \widetilde{b}:= \left\{ \begin{array}{c l}
\frac{d}{3} & \text{if } d=1, 2, 3, \\
2 &\text{if } d\geq 4,
\end{array} \right. \label{define tilde alpha and b}
\end{align}
then $\inls$ is locally well-posedness in $H^\gamma(\R^d)$. 
\item if $\alpha_\star<\alpha<\widetilde{\alpha}$, $0<b<\widetilde{b}$ and $\gamc<\gamma\leq \min\left\{\frac{d}{2}, 1\right\}$, then $\inls$ is globally well-posed in $H^\gamma(\R^d)$ for small initial data. 
\end{itemize}
In particular, we have the following local well-posedness in the energy space for $\inls$.
\begin{theorem}[\cite{Guzman}] \label{theorem Guzman result}
Let $d\geq 2, 0<b<\widetilde{b}$ and $0<\alpha<\alpha^\star$, where
\[
\renewcommand*{\arraystretch}{1.2}
\widetilde{b}:=\left\{
\begin{array}{cl}
\frac{d}{3} &\text{if } d=2,3,\\
2 &\text{if } d\geq 4.
\end{array}
\right. 
\]
Then $\inls$ is locally well-posed in $H^1(\R^d)$. Moreover, local solutions to $\inls$ satisfy $u \in L^p_{\loc}$ $((-T_*,T^*),W^{1,q} (\R^d))$ for any Schr\"odinger admissible pair $(p,q)$, where $(-T_*,T^*)$ is the maximal time interval of existence.
\end{theorem}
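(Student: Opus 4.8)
The plan is to construct the solution by a contraction mapping argument applied to the Duhamel operator
\[
\Phi(u)(t) := e^{it\Delta} u_0 + i\mu \int_0^t e^{i(t-s)\Delta}\left(|x|^{-b}|u(s)|^\alpha u(s)\right) ds,
\]
on a short time interval $I=[-T,T]$. First I would fix a finite family of Schr\"odinger admissible pairs and work in a closed ball of the complete metric space
\[
X := \left\{ u \in L^\infty(I,H^1) : \ u, \nabla u \in L^p(I,L^q) \text{ for each chosen pair } (p,q) \right\},
\]
endowed with the corresponding Strichartz norm but with the contraction measured in the weaker metric $d(u,v) := \|u-v\|_{L^p(I,L^q)}$ carrying no derivative; this Kato-type device avoids differentiating the non-smooth map $z\mapsto |z|^\alpha z$.

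Since $\nabla$ commutes with $e^{it\Delta}$, the homogeneous and inhomogeneous Strichartz estimates reduce the problem to controlling the nonlinearity $F(u):=|x|^{-b}|u|^\alpha u$ and its gradient in a dual Strichartz norm $L^{p'}(I,L^{q'})$. Concretely I would bound $\|F(u)\|_{L^{p'}(I,L^{q'})}$ and $\|\nabla F(u)\|_{L^{p'}(I,L^{q'})}$ by $\|u\|_X$ (up to a positive power of $T$), and the difference $\|F(u)-F(v)\|$ by $d(u,v)$ times bounded norms, using the elementary inequality $\big||u|^\alpha u - |v|^\alpha v\big| \lesssim (|u|^\alpha + |v|^\alpha)|u-v|$.

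The heart of the argument is the spatial estimate of $F(u)$ in the presence of the singular weight. I would split $\R^d = B \cup B^c$ with $B=\{|x|<1\}$. On $B^c$ the weight satisfies $|x|^{-b}\le 1$, so the estimates reduce to those of the homogeneous NLS and are handled by H\"older in space together with the Sobolev embedding $H^1 \hookrightarrow L^r$. On $B$ I would apply H\"older to isolate the weight,
\[
\big\||x|^{-b}|u|^\alpha u\big\|_{L^{q'}(B)} \le \big\||x|^{-b}\big\|_{L^\gamma(B)}\, \big\||u|^\alpha u\big\|_{L^\nu(B)}, \qquad \tfrac{1}{q'}=\tfrac{1}{\gamma}+\tfrac{1}{\nu},
\]
requiring $b\gamma<d$ for the local integrability $|x|^{-b}\in L^\gamma(B)$, and then estimate $\||u|^\alpha u\|_{L^\nu}=\|u\|_{L^{(\alpha+1)\nu}}^{\alpha+1}$ via Sobolev embedding. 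The gradient estimate is the delicate point: when $\nabla$ falls on the weight it produces the more singular factor $|x|^{-b-1}$, so the same H\"older scheme now demands $(b+1)\gamma<d$ for a suitable $\gamma$ that is still compatible with an admissible pair and with the Sobolev range of exponents.

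Finally, subcriticality $\alpha<\alpha^\star$ leaves room in the admissibility relation to gain a factor $T^\theta$ with $\theta>0$ from H\"older in time, so that for $T$ small $\Phi$ maps the ball into itself and is a contraction in the metric $d$; the fixed point is the unique solution, and Lipschitz dependence on $u_0$ follows from the same estimates. Membership $u\in L^p_{\text{loc}}(\R,L^q)$ for every admissible pair is then recovered by feeding the solution back into the Strichartz estimates. I expect the main obstacle to be the gradient estimate on $B$: simultaneously meeting admissibility of $(p,q)$, the integrability condition $(b+1)\gamma<d$, the Sobolev constraint $(\alpha+1)\nu\in[2,\tfrac{2d}{d-2}]$, and the positivity of the time power $\theta$ is exactly what forces the dimension-dependent restriction $b<\widetilde{b}$.
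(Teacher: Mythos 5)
Your proposal is correct and follows essentially the same route as the paper: a contraction on a Strichartz-type ball with the Kato device of measuring the contraction in the derivative-free metric $\|u-v\|_{S(L^2,I)}$, with the nonlinear estimates obtained by splitting $\R^d$ into $B$ and $B^c$, applying H\"older with $\||x|^{-b}\|_{L^\gamma}$ (respectively $\||x|^{-b-1}\|_{L^\gamma}$ for the term where the gradient hits the weight), Sobolev embedding, and a $T^\theta$ gain from subcriticality $\alpha<\alpha^\star$. You also correctly identify that the condition $\frac{d}{\gamma}>b+1$ arising from $\nabla(|x|^{-b})$ on $B$ is precisely the source of the dimension-dependent restriction on $b$ (and of the failure of the method for $d=1$), which is exactly how the paper's Lemmas 3.1--3.3 and the proof of Theorem 1.2 proceed.
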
 
Note that the result of Guzm\'an \cite{Guzman} about the local well-posedness for $\inls$ in $H^1(\R^d)$ is weaker than the one of Genoud and Stuart \cite{GS}. More precisely, it does not treat the case $d=1$, and there is a restriction on the validity of $b$ when $d=2$ or $3$. Although the result showed by Genoud and Stuart is strong, but one does not know whether local solutions to $\inls$ belong to $L^p_{\text{loc}}((-T_*,T^*), W^{1,q}(\R^d))$ for any Schr\"odinger admissible pair $(p,q)$. This property plays an important role in proving the scattering for the defocusing $\inls$. Our first result is the following local well-posedness in $H^1(\R^d)$ which improves Guzm\'an's result on the range of $b$ in the two and three spatial dimensions. 
\begin{theorem} \label{theorem local existence}
Let 
\[
d\geq 4, \quad 0<b<2, \quad 0 <\alpha <\alpha^\star,
\]
or 
\[
d=3, \quad 0< b <1, \quad 0<\alpha <\alpha^\star,
\]
or 
\[
d=3, \quad 1 \leq b<\frac{3}{2}, \quad 0<\alpha <\frac{6-4b}{2b-1},
\]
or
\[
d=2, \quad 0<b<1, \quad 0 <\alpha< \alpha^\star.
\]
Then $\inls$ is locally well-posed in $H^1(\R^d)$. Moreover, local solutions to $\inls$ satisfy $u \in L^p_{\loc}$ $((-T_*,T^*),W^{1,q} (\R^d))$ for any Schr\"odinger admissible pair $(p,q)$, where $(-T_*,T^*)$ is the maximal time interval of existence.
\end{theorem}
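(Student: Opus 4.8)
The plan is to realize the solution as the unique fixed point of the Duhamel map
\[
\Phi(u)(t) := e^{it\Delta}u_0 + i\mu\int_0^t e^{i(t-s)\Delta}\,|x|^{-b}|u(s)|^\alpha u(s)\,ds
\]
on a closed ball of the Strichartz space $X(I):=C(I,H^1(\R^d))\cap L^a(I,W^{1,r}(\R^d))$ over a short interval $I=[-T,T]$, for a suitably chosen (non-endpoint) Schr\"odinger admissible pair $(a,r)$, possibly together with one auxiliary admissible pair. To accommodate the range $0<\alpha<\alpha^\star$ without differentiating the nonlinearity, I would equip this ball with the weaker metric $d(u,v):=\|u-v\|_{L^a(I,L^r)}$. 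The first step is to record the homogeneous and inhomogeneous Strichartz estimates at the $H^1$ level, which control the linear evolution by $\|e^{it\Delta}u_0\|_{X(I)}\lesssim\|u_0\|_{H^1}$ and reduce the whole problem to a single estimate of the nonlinearity in a dual Strichartz norm.

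The core is that nonlinear estimate: for a well-chosen admissible pair $(\tilde a,\tilde r)$ one needs
\[
\big\| |x|^{-b}|u|^\alpha u\big\|_{L^{\tilde a'}(I,W^{1,\tilde r'})} \lesssim T^\theta\,\|u\|_{X(I)}^{\alpha+1}
\]
for some $\theta>0$, the positive power of $T$ being available since $\alpha<\alpha^\star$ is energy-subcritical, and coming from H\"older in time. Because
\[
\nabla\!\big(|x|^{-b}|u|^\alpha u\big) = -\,b\,|x|^{-b-2}x\,|u|^\alpha u + |x|^{-b}\nabla\!\big(|u|^\alpha u\big),
\]
the gradient produces a genuinely more singular weight $|x|^{-b-1}$. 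I would treat both the weight and its derivative by splitting $\R^d=B\cup B^c$ with $B=\{|x|<1\}$. On $B^c$ the weight is bounded by $1$, so the estimates collapse to the standard homogeneous NLS case and close by the Sobolev embedding of $H^1$. On $B$ I would apply H\"older to isolate the singular factor, writing $\||x|^{-b}\|_{L^\nu(B)}$ (respectively $\||x|^{-b-1}\|_{L^{\nu}(B)}$ for the gradient term), which is finite precisely when $b\nu<d$ (respectively $(b+1)\nu<d$), and bound the remaining powers of $u$ and $\nabla u$ by Sobolev embedding and interpolation between the $H^1$ and $L^a W^{1,r}$ norms defining $X(I)$.

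The main obstacle is the simultaneous compatibility of all these exponents for the gradient term on $B$: the constraint $(b+1)\nu<d$ coming from the $|x|^{-b-1}$ piece is the binding one, and it must be reconciled at once with admissibility of $(\tilde a,\tilde r)$, with the Sobolev range $H^1\hookrightarrow L^\rho$, and with the requirement $\theta>0$. It is exactly this balancing that forces the dimension- and $b$-dependent hypotheses in the statement. In particular, the coupling between $b$ and $\alpha$ in the regime $d=3,\ 1\le b<\tfrac32$ (namely $\alpha<\tfrac{6-4b}{2b-1}$) appears because, once $b\ge1$, the weight $|x|^{-b-1}$ can only be absorbed by raising $\nu$ and lowering the H\"older exponent carrying $u$, which caps the admissible power $\alpha+1$. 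The gain over Theorem~\ref{theorem Guzman result} in dimensions two and three comes precisely from this sharper bookkeeping of exponents near the origin, made possible by working with a non-endpoint admissible pair.

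Finally, an entirely analogous difference estimate for $\Phi(u)-\Phi(v)$, using the pointwise bound $\big||u|^\alpha u-|v|^\alpha v\big|\lesssim(|u|^\alpha+|v|^\alpha)|u-v|$ together with the same spatial splitting, shows in the metric $d(\cdot,\cdot)$ that $\Phi$ is a contraction on a small ball of $X(I)$ for $T$ sufficiently small; note that no derivative falls on the difference, so the argument is valid for all $0<\alpha<\alpha^\star$. The unique fixed point is the desired local solution, and continuous dependence follows by the same estimates. The membership $u\in L^p_{\mathrm{loc}}(\R,L^q(\R^d))$ for every admissible pair $(p,q)$ is then obtained by applying the inhomogeneous Strichartz estimate a final time to the constructed solution, since the dual norm of its nonlinearity is already under control.
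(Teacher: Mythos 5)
Your proposal follows essentially the same route as the paper: a contraction argument for the Duhamel map on a Strichartz ball equipped with the weaker (derivative-free) metric, with the key nonlinear estimate obtained by splitting $\R^d$ into the unit ball $B$ and its complement, applying H\"older to isolate $\||x|^{-b}\|_{L^\nu(B)}$ (respectively $\||x|^{-b-1}\|_{L^\nu(B)}$ for the gradient term, which is indeed the binding constraint producing the dimension- and $b$-dependent hypotheses, including the cap $\alpha<\frac{6-4b}{2b-1}$ when $d=3$, $1\le b<\frac{3}{2}$), and closing via Sobolev embedding with a positive power of $T$ from H\"older in time. This matches the paper's proof via Lemmas \ref{lem nonlinear estimate d geq 3}, \ref{lem nonlinear estimate d=3} and \ref{lem nonlinear estimate d=2}, up to minor bookkeeping choices (e.g.\ the paper puts $|x|^{-b-1}$ in $L^d(B^c)$ rather than $L^\infty(B^c)$ in dimensions $d\ge 3$).
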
 
We will see in Section $\ref{S3}$ that one can not expect a similar result as in Theorem $\ref{theorem Guzman result}$ and Theorem $\ref{theorem local existence}$ holds in the one dimensional case by using Strichartz estimates. Thus the local well-posedness in the energy space for $\inls$ of Genoud and Stuart is the best known result. 

\begin{remark}
	The methods used to show the local well-posedness in $H^1(\R^d)$ in this paper and in \cite{Guzman} are not applicable to treat the critical regularity. After the submission of this paper, the author learns that there are recent papers \cite{LS, KLS} addressing the local well-posedness for $\inls$ with critical regularities. The proofs of these results are based on weighted Strichartz and Sobolev estimates.  	
\end{remark}

The local well-posedness \footnote{The local well-posedness in $H^1(\R^d)$ of Genoud and Stuart is still valid for the defocusing case.} of Genoud and Stuart in \cite{GS, Genoud-Thesis} combines with the conservations of mass and energy immediately give the global well-posedness in $H^1(\R^d)$ for the defocusing $\inls$, i.e. $\mu=-1$. To our knowledge, there are few results concerning long-time dynamics of the defocusing $\inls$. Let us introduce the following weighted space
\[
\Sigma :=H^1(\R^d)\cap L^2(\R^d,|x|^2dx) = \{ u \in H^1(\R^d) : |x| u \in L^2(\R^d) \},
\]
equipped with the norm
\[
\|u\|_{\Sigma}:= \|u\|_{H^1(\R^d)} + \|x u\|_{L^2(\R^d)}.
\]
Our next result concerns with the decay of global solutions to the defocusing $\inls$ by assuming the initial data in $\Sigma$. 

\begin{theorem} \label{theorem decay property}
Let $0<b<\min\{2, d\}$. Let $u_0 \in \Sigma$ and $u \in C(\R, H^1(\R^d))$ be the unique global solution to the defocusing $\inls$. Then the following properties hold:
\begin{itemize}
\item[1.] If $\alpha \in [\alpha_\star, \alpha^\star)$, then for every 
\begin{align}
\renewcommand*{\arraystretch}{1.2}
\left\{
\begin{array}{l l}
2\leq q \leq \frac{2d}{d-2} & \text{if } d\geq 3, \\
2 \leq q <\infty & \text{if } d=2, \\
2 \leq q \leq \infty & \text{if } d=1,
\end{array}
\right. \label{define q}
\end{align}
there exists $C>0$ such that 
\begin{align}
\|u(t)\|_{L^q(\R^d)} \leq C|t|^{-d\left(\frac{1}{2}-\frac{1}{q}\right)},  \label{decay property 1}
\end{align}
for all $t\in \R\backslash \{0\}$.
\item[2.] If $\alpha \in (0, \alpha_\star)$, then for every $q$ given in $(\ref{define q})$, there exists $C>0$ such that
\begin{align}
\|u(t)\|_{L^q(\R^d)} \leq C|t|^{-\frac{d(2b+d\alpha)}{4}\left(\frac{1}{2}-\frac{1}{q}\right)},  \label{decay property 2}
\end{align}
for all $t\in \R\backslash \{0\}$.
\end{itemize}
\end{theorem}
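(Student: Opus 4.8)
The plan is to exploit the pseudoconformal structure of the equation through the operator $J(t):=x+2it\nabla$, which factorizes as
\[
J(t) = 2it\, e^{i|x|^2/(4t)}\,\nabla\!\left(e^{-i|x|^2/(4t)}\,\cdot\,\right),
\]
so that $\|J(t)u(t)\|_{L^2(\R^d)} = 2|t|\,\|\nabla v(t)\|_{L^2(\R^d)}$, where $v(t):=e^{-i|x|^2/(4t)}u(t)$ has $|v(t)|=|u(t)|$. First I would establish the virial identity
\[
\frac{d^2}{dt^2}\int_{\R^d}|x|^2|u(t,x)|^2\,dx = 8\|\nabla u(t)\|_{L^2(\R^d)}^2 - 4\mu(d\alpha+2b)\,G(t),
\]
where the extra factor $2b$ compared with the homogeneous case stems from $x\cdot\nabla(|x|^{-b})=-b|x|^{-b}$. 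Expanding $\|J(t)u\|_{L^2}^2 = \int|x|^2|u|^2\,dx - t\frac{d}{dt}\int|x|^2|u|^2\,dx + 4t^2\|\nabla u\|_{L^2}^2$, and using energy conservation $(\ref{energy conservation})$ to write $\frac{d}{dt}\|\nabla u\|_{L^2}^2 = 2\mu G'(t)$, I would arrive, for the defocusing equation $\mu=-1$, at
\[
\frac{d}{dt}\Theta(t) = 4d(\alpha_\star-\alpha)\,t\,G(t), \qquad \Theta(t):=\|J(t)u(t)\|_{L^2(\R^d)}^2 + 8t^2 G(t),
\]
both terms defining $\Theta(t)$ being nonnegative since $G(t)\geq 0$.

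For Part 1, the hypothesis $\alpha\geq\alpha_\star$ forces $\alpha_\star-\alpha\leq 0$, so $\tfrac{d}{dt}\Theta(t)$ has the sign of $-t$; hence $\Theta$ decreases on $(0,\infty)$ and increases on $(-\infty,0)$, yielding the uniform bound $\|J(t)u(t)\|_{L^2}^2\leq\Theta(t)\leq\Theta(0)=\|xu_0\|_{L^2}^2$. A standard Gagliardo--Nirenberg inequality applied to $v(t)$, together with mass conservation $(\ref{mass conservation})$, then gives
\[
\|u(t)\|_{L^q(\R^d)} = \|v(t)\|_{L^q(\R^d)} \leq C\|\nabla v(t)\|_{L^2(\R^d)}^{\theta}\|v(t)\|_{L^2(\R^d)}^{1-\theta} = C(2|t|)^{-\theta}\|J(t)u(t)\|_{L^2(\R^d)}^{\theta}\|u_0\|_{L^2(\R^d)}^{1-\theta},
\]
with $\theta=d(\tfrac12-\tfrac1q)$, which is exactly $(\ref{decay property 1})$; note that the admissible range of $q$ in $(\ref{define q})$ is precisely the range of validity of this Gagliardo--Nirenberg inequality.

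For Part 2, $\alpha<\alpha_\star$ reverses the sign and $\Theta$ is no longer bounded. Instead I would use only the nonnegativity $8t^2 G(t)\leq\Theta(t)$, i.e. $G(t)\leq\Theta(t)/(8t^2)$, to convert the identity into the differential inequality $\Theta'(t)\leq\tfrac{c_0}{t}\Theta(t)$ for $t>0$, with $c_0=\tfrac{d(\alpha_\star-\alpha)}{2}=2-b-\tfrac{d\alpha}{2}\in(0,2)$. A logarithmic Gronwall argument then yields $\Theta(t)\lesssim|t|^{c_0}$ (and symmetrically for $t<0$), so $\|J(t)u(t)\|_{L^2}\lesssim|t|^{c_0/2}$. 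Feeding this into the same Gagliardo--Nirenberg step produces the decay rate $|t|^{-\theta(1-c_0/2)}$, and since $1-\tfrac{c_0}{2}=\tfrac{2b+d\alpha}{4}$ the exponent becomes $\tfrac{d(2b+d\alpha)}{4}(\tfrac12-\tfrac1q)$, which is precisely $(\ref{decay property 2})$.

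The main obstacle is not the algebra but its rigorous justification: the quantities $\int|x|^2|u|^2$, its first and second time derivatives, and $\Theta$ must be finite and differentiable in $t$, which requires propagating the weighted regularity $u(t)\in\Sigma$ and controlling the singular factor $|x|^{-b}$ near the origin (integrable since $b<\min\{2,d\}$). I would handle this, as is standard, by first proving the virial and pseudoconformal identities for smooth, rapidly decaying solutions and then passing to the limit by approximating $u_0$ in $\Sigma$; the $L^p_{\mathrm{loc}}(\R,L^q(\R^d))$ Strichartz bounds furnished by Theorem \ref{theorem local existence} (or Theorem \ref{theorem Guzman result}) are what make this approximation legitimate and ensure continuous dependence of the relevant norms.
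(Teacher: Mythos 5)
Your proposal is correct and takes essentially the same approach as the paper: your quantity $\Theta(t)=\|(x+2it\nabla)u(t)\|_{L^2_x}^2+8t^2G(t)$ is exactly the quantity $f(t)$ in the paper's proof of the pseudo-conformal conservation law $(\ref{pseudoconformal conservation law})$, your identity $\Theta'(t)=4d(\alpha_\star-\alpha)\,t\,G(t)$ is precisely the paper's $f'(t)=4(4-2b-d\alpha)tG(t)$, and both arguments conclude via the Gagliardo--Nirenberg inequality applied to $v=e^{-i|x|^2/(4t)}u$ together with mass conservation, after justifying the identities by the standard approximation argument. The only cosmetic difference is in Part 2, where you run Gronwall directly on $\Theta$ using $8t^2G(t)\leq\Theta(t)$, whereas the paper runs Gronwall on $g(t)=t^2G(t)$ and then feeds the resulting bound $G(t)\lesssim t^{-(2b+d\alpha)/2}$ back into $(\ref{equivalence pseudoconformal conservation law})$ to control $\|\nabla v(t)\|_{L^2_x}$; the two computations produce the identical exponent.
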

This result extends the well-known result of the classical (i.e. $b=0$) nonlinear Schr\"odinger equation (see e.g. \cite[Theorem 7.3.1]{Cazenave} and references cited therein). 

We then use this decay and Strichartz estimates to show the scattering for global solutions to the defocusing $\inls$. Due to the singularity of $|x|^{-b}$, the scattering result does not cover the same range of exponents $b$ and $\alpha$ as in Theorem $\ref{theorem local existence}$. More precisely, we have the following:
\begin{theorem} \label{theorem scattering weighted space}
Let 
\[
d\geq 4, \quad 0<b<2, \quad \alpha_\star \leq \alpha <\alpha^\star,
\]
or 
\[
d=3, \quad 0< b <1, \quad \frac{5-2b}{3}<\alpha <3-2b,
\]
or
\[
d=2, \quad 0<b<1, \quad \alpha_\star \leq \alpha< \alpha^\star.
\]
Let $u_0 \in \Sigma$ and $u$ be the unique global solution to the defocusing $\inls$. Then there exist unique $u_0^\pm \in \Sigma$ such that
\[
\lim_{t\rightarrow \pm \infty} \|e^{-it\Delta}u(t)- u_0^\pm\|_{\Sigma} =0.
\]
\end{theorem}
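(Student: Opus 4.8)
The strategy is to obtain the scattering states as Duhamel limits of $U(-t)u(t)$, where $U(t):=e^{it\Delta}$ and $F(u):=|x|^{-b}|u|^\alpha u$, running the weighted estimates through the vector field $J(t):=x+2it\nabla$. I would use three elementary properties of $J(t)$: it is the conjugated position operator, $J(t)=U(t)\,x\,U(-t)$, so that it commutes with the Duhamel propagator, $J(t)U(t-s)=U(t-s)J(s)$, and $\|J(t)\phi\|_{L^2}=\|x\,U(-t)\phi\|_{L^2}$; it factorises as $J(t)=2it\,e^{i|x|^2/4t}\nabla\!\left(e^{-i|x|^2/4t}\,\cdot\,\right)$, hence acts as a gradient up to a unimodular gauge; and, $|u|^\alpha u$ being gauge invariant, it satisfies the Leibniz-type bound $|J(t)(|u|^\alpha u)|\lesssim|u|^\alpha\,|J(t)u|$. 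The first two facts give $\|x\,U(-t)u(t)\|_{L^2}=\|J(t)u(t)\|_{L^2}$, so that controlling $J(t)u$ controls the weighted part of the $\Sigma$-norm.

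The first step is to convert the pointwise decay of Theorem~\ref{theorem decay property} into global-in-time Strichartz control. Feeding \eqref{decay property 1} into $F(u)$ and rerunning the estimates behind Theorem~\ref{theorem local existence} — which already absorb the singular factor $|x|^{-b-1}|u|^{\alpha+1}$ produced by $\nabla|x|^{-b}$ — I would show $u\in L^p(\R,W^{1,q})$ for every Schr\"odinger admissible pair $(p,q)$ with norms finite on the whole line, the decay rendering $s\mapsto\|F(u)(s)\|_{W^{1,q'}}$ integrable in $L^{p'}$ precisely because $\alpha>\alpha_\star$. Applying the Cauchy criterion to $U(-t)u(t)=u_0-i\int_0^tU(-s)F(u)(s)\,ds$ then produces $u_0^\pm\in H^1$ with $\|U(-t)u(t)-u_0^\pm\|_{H^1}\to0$, i.e. the energy-space part of the scattering.

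To reach the weighted norm I would apply $J(t)$ to the Duhamel tail $u(t)-U(t)u_0^\pm=i\int_t^{\pm\infty}U(t-s)F(u)(s)\,ds$ and commute, obtaining $J(t)(u(t)-U(t)u_0^\pm)=i\int_t^{\pm\infty}U(t-s)\,J(s)F(u)(s)\,ds$, so that the weighted error is governed by $\|J(s)F(u)\|_{L^{p'}L^{q'}([t,\pm\infty))}$; here one first checks, by the same fixed-point argument now using $\|J(0)u_0\|_{L^2}=\|xu_0\|_{L^2}<\infty$ as data, that $J(s)u\in L^p(\R,L^q)$ globally. The product rule gives
\[
J(s)F(u)=|x|^{-b}\,J(s)(|u|^\alpha u)+2is\,(\nabla|x|^{-b})\,|u|^\alpha u ,
\]
so $J(s)F$ has the same two-part structure as $\nabla F$: a regular part, bounded by the Leibniz inequality by $|x|^{-b}|u|^\alpha\,|J(s)u|$ and treated exactly as the regular part of $\nabla F$ in the $H^1$ estimate above, now with $J(s)u$ in place of $\nabla u$; and a singular part coming from the weight — except that the singular part now carries the extra, linearly growing factor $|s|$, its modulus being $2b\,|s|\,|x|^{-b-1}|u|^{\alpha+1}$.

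This surplus factor $|s|$ on the singular term is the main obstacle, and the reason the scattering range of $\alpha$ is strictly smaller than the local well-posedness range. I would estimate $|x|^{-b-1}|u|^{\alpha+1}$ by splitting $\R^d=\{|x|\le1\}\cup\{|x|>1\}$: on the exterior the weight is bounded and only the decay \eqref{decay property 1} is needed, while on the interior one absorbs the singularity into a high Lebesgue power of $u$, legitimate since $b+1$ stays below the relevant integrability threshold throughout the stated ranges of $(d,b)$. The time-integrand then decays like $|s|^{1-\beta}$ for some $\beta=\beta(d,b,\alpha)$, and its $L^{p'}$-tail over $[t,\pm\infty)$ vanishes as $t\to\pm\infty$ exactly when $\alpha$ crosses the sharpened threshold in the statement (for instance $\tfrac{5-2b}{3}$ rather than $\alpha_\star=\tfrac{4-2b}{3}$ when $d=3$) — the single extra power of $|s|$ being precisely what this stronger lower bound is calibrated to beat. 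Converting from $J(t)$ back to $x$ via $x=J(t)-2it\nabla$ and checking that the complementary contribution $|t|\,\|\nabla(u(t)-U(t)u_0^\pm)\|_{L^2}$ likewise tends to $0$ under the same decay budget then yields $u_0^\pm\in\Sigma$ together with $\|u(t)-U(t)u_0^\pm\|_{\Sigma}\to0$, as claimed.
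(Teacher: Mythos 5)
Your overall route is the paper's own: convert the decay of Theorem \ref{theorem decay property} into global Strichartz bounds, obtain the $H^1$ limit by the Cauchy criterion, then rerun the scheme on $J(t)u$ using $J(t)=e^{it\Delta}xe^{-it\Delta}$ to place $u_0^\pm$ in $\Sigma$; your treatment of the regular part $|x|^{-b}J(s)(|u|^\alpha u)$ via the gauge factorization and the Leibniz bound is also exactly the paper's. The genuine gap is in your treatment of the singular part $2is\,(\nabla |x|^{-b})\,|u|^\alpha u$, and it is fatal precisely in the cases where the theorem claims the most.

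First, your diagnosis is inconsistent with the statement you are proving. You assert that the surplus factor $|s|$ is unavoidable and is ``the reason the scattering range of $\alpha$ is strictly smaller,'' so that the tail integrals converge only past a sharpened threshold. But for $d\geq 4$ and $d=2$ the theorem keeps the full range $[\alpha_\star,\alpha^\star)$, including the endpoint $\alpha=\alpha_\star$. Quantitatively: if the factor $|s|$ is retained and beaten by decay, the admissible exponent $p$ attached to the singular term must satisfy $p<\alpha+1$ (instead of $p<2\alpha+2$ for the regular term), and a short computation with $q\approx\frac{d(\alpha+2)}{d-b}$ shows that $p<\alpha+1$ \emph{fails} at $\alpha=\alpha_\star$ in every dimension. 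So no variant of your mechanism can reach the $d\geq 4$ case. What the paper actually does for $d\geq4$ (Lemma \ref{lem scattering weighted space d geq 4} and Case 1 of Proposition \ref{prop global bound weighted space}) is to \emph{absorb} the $|s|$ rather than beat it: writing $v=e^{-i|x|^2/4s}u$, the singular term is $2|s|\,|x|^{-b-1}|v|^{\alpha}|v|$; one factor $|v|$ is placed in $L^{n_1}$ through the homogeneous embedding $\dot{W}^{1,q_1}\subset L^{n_1}$ (usable exactly when $d\geq 4$, since it needs $q_1<d$), and then $2|s|\,\|\nabla v(s)\|_{L^{q_1}}=\|J(s)u(s)\|_{L^{q_1}}$, so the term closes inside the bootstrap for $\|Ju\|_{S(L^2)}$ under the same condition $p<2\alpha+2$ as the regular part, with no leftover growth in $s$.

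Second, your concrete plan for the singular term --- interior/exterior splitting with all $\alpha+1$ powers of $u$ controlled by decay norms --- fails already at the spatial H\"older ledger when $d\geq 3$. Decay (or mere $H^1$ boundedness) controls $\|u(s)\|_{L^r}$ only for $r\leq\frac{2d}{d-2}$, while near the origin you must pair $\||x|^{-b-1}\|_{L^{\gamma}(B)}$, which requires $\frac{1}{\gamma}>\frac{b+1}{d}$, with $\|u\|^{\alpha+1}_{L^{r}}$ inside a dual admissible exponent $\frac{1}{q'}\leq\frac{1}{2}+\frac{1}{d}$. The H\"older identity $\frac{1}{q'}=\frac{1}{\gamma}+\frac{\alpha+1}{r}$ then forces $\alpha(d-2)<2-2b$, which excludes every $\alpha\geq\alpha_\star$ when $d\geq 4$ and all of the stated $d=3$ range except a sliver with $b<\frac14$. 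Hence one factor of $u$ \emph{must} be routed through a Sobolev embedding into a Strichartz norm; for $d=2,3$ only the inhomogeneous embedding $W^{1,q_1}\subset L^{n_1}$ is available, its output $\|\scal{\nabla}u\|_{S(L^2,I)}$ cannot absorb the $|s|$, and this is exactly why the paper (Lemma \ref{lem nonlinear estimate weighted solution}) distributes $|s|^{1/\alpha}$ over the remaining $\alpha$ decay factors and imposes $p<\alpha+1$ --- the true origin of the threshold $\frac{5-2b}{3}$ for $d=3$ (and of the vacuous condition $\alpha>1-b$ for $d=2$). Note finally that the $d=3$ upper bound $\alpha<3-2b$ is not produced by the weighted term at all: it is already forced at the level of the global $H^1$ bound (Lemma \ref{lem scattering weighted space d=3}), contrary to your account that the entire reduction of range stems from the extra power of $|s|$.
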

In this theorem, we only consider the case $\alpha \in [\alpha_\star, \alpha^\star)$. A similar result in the case $\alpha \in (0,\alpha^\star)$ is possible, but it is complicated due to the rate of decays in $(\ref{decay property 2})$. We will give some comments about this case in the end of Section $\ref{S6}$. 

The proof of Theorem \ref{theorem scattering weighted space} is based on a standard argument (see e.g. \cite{Cazenave}) using decay estimates of global solutions given in Theorem \ref{theorem decay property} and nonlinear estimates given in Lemmas \ref{lem scattering weighted space d geq 4}, \ref{lem scattering weighted space d=3}, \ref{lem scattering weighted space d=2}. Due to the appearance of the singular term $|x|^{-b}$, we need more care in showing nonlinear estimates. We refer the reader to Section \ref{S6} for more details.

\begin{remark}
	After this paper was submitted to arXiv, there are several works studying the scattering in the energy space for $\inls$, for instance, \cite{Dinh-JEE}, \cite{Campos}, \cite{Dinh-arXiv}, and \cite{XZ}.
\end{remark}

This paper is organized as follows. In the next section, we introduce some notation and recall Strichartz estimates for the linear Schr\"odinger equation. In Section 3, we prove the local well-posedness given in Theorem $\ref{theorem local existence}$. In Section 4, we derive the virial identity and show the pseudo-conformal conservation law related to the defocusing $\inls$. We will give the proof of Theorem $\ref{theorem decay property}$ in Section 5. Finally, Section 6 is devoted to the scattering result of Theorem $\ref{theorem scattering weighted space}$. 

\section{Preliminaries} 
\setcounter{equation}{0}
\label{S2}

In the sequel, the notation $A \lesssim B$ denotes an estimate of the form $A\leq CB$ for some constant $C>0$. The constant $C>0$ may change from line to line. 
\subsection{Nonlinearity} \label{subsection nonlinearity}
Let $F(x, z):=|x|^{-b} f(z)$ with $b>0$ and $f(z):=|z|^\alpha z$. The complex derivatives of $f$ are
\[
\partial_zf(z) = \frac{\alpha+2}{2}|z|^\alpha, \quad \partial_{\overline{z}} f(z) = \frac{\alpha}{2} |z|^{\alpha-2} z^2.
\] 
We have for $z, w \in \C$, 
\[
f(z) - f(w) = \int_0^1 \Big(\partial_z f(w+\theta(z-w)) (z-w) + \partial_{\overline{z}} f(w+\theta(z-w)) (\overline{z}-\overline{w}) \Big) d\theta.
\]
Thus,
\begin{align}
|F(x, z)-F(x, w)| \lesssim |x|^{-b} (|z|^\alpha+|w|^\alpha) |z-w|. \label{nonlinear inequality}
\end{align}
To deal with the singularity $|x|^{-b}$, we have the following remark.
\begin{remark}[\cite{Guzman}] \label{rem singularity bound}
Let $B=B(0,1) = \{x\in \R^d: |x|<1\}$ and $B^c=\R^d \backslash B$. Then
\[
\||x|^{-b}\|_{L^\gamma_x(B)} <\infty, \quad \text{if} \quad \frac{d}{\gamma}>b,
\]
and
\[
\||x|^{-b}\|_{L^\gamma_x(B^c)} <\infty, \quad \text{if} \quad \frac{d}{\gamma}<b.
\]
\end{remark}

\subsection{Strichartz estimates} \label{subsection strichartz estimate}
Let $I \subset \R$ and $p, q \in [1,\infty]$. We define the mixed norm
\[
\|u\|_{L^p_t(I, L^q_x)} := \Big( \int_I \Big( \int_{\R^d} |u(t,x)|^q dx \Big)^{\frac{p}{q}} dt\Big)^{\frac{1}{p}}
\] 
with a usual modification when either $p$ or $q$ are infinity. When there is no risk of confusion, we may write $L^p_t L^q_x$ instead of $L^p_t(I,L^q_x)$. We also use $L^p_{t,x}$ when $p=q$.
\begin{definition}
A pair $(p,q)$ is said to be \textbf{Schr\"odinger admissible}, for short $(p,q) \in S$, if 
\[
(p,q) \in [2,\infty]^2, \quad (p,q,d) \ne (2,\infty,2), \quad \frac{2}{p}+\frac{d}{q} = \frac{d}{2}.
\]
\end{definition}
We denote for any spacetime slab $I\times \R^d$,
\begin{align}
\|u\|_{S(L^2, I)}:= \sup_{(p,q) \in S} \|u\|_{L^p_t(I,L^q_x)}, \quad \|v\|_{S'(L^2,I)}:=\inf_{(p,q)\in S} \|v\|_{L^{p'}_t(I, L^{q'}_x)}. \label{strichartz norm}
\end{align}
We next recall well-known Strichartz estimates for the linear Schr\"odinger equation. We refer the reader to \cite{Cazenave, Tao} for more details.
\begin{proposition} \label{prop strichartz}
Let $u$ be a solution to the linear Schr\"odinger equation, namely
\[
u(t)= e^{it\Delta}u_0 + \int_0^t e^{i(t-s)\Delta} F(s) ds,
\]
for some data $u_0, F$. Then we have
\begin{align}
\|u\|_{S(L^2,\R)} \lesssim \|u_0\|_{L^2_x} + \|F\|_{S'(L^2, \R)}. \label{strichartz estimate}
\end{align}
\end{proposition}

\section{Local existence} 
\setcounter{equation}{0}
\label{S3}
In this section, we give the proof of the local well-posedness given in Theorem $\ref{theorem local existence}$. To prove this result, we need the following lemmas which give some estimates of the nonlinearity.
\begin{lemma} [\cite{Guzman}] \label{lem-non-est-d-geq3}
Let $d\geq 4$ and $0<b<2$ or $d=3$ and $0<b<1$. Let $0<\alpha <\alpha^\star$ and $I = [0,T]$. Then there exist $\theta_1, \theta_2>0$ such that
\begin{align}
\||x|^{-b} |u|^\alpha v\|_{S'(L^2,I)} &\lesssim \left(T^{\theta_1} + T^{\theta_2}\right) \|\nabla u\|^\alpha_{S(L^2,I)} \|v\|_{S(L^2, I)}, \label{non-est-1-d-geq3} \\
\|\nabla(|x|^{-b} |u|^\alpha u)\|_{S'(L^2,I)} & \lesssim \left(T^{\theta_1} + T^{\theta_2}\right) \|\nabla u\|^{\alpha+1}_{S(L^2, I)}. \label{non-est-2-d-geq3}
\end{align}
\end{lemma}
The proof of this result is given in \cite[Lemma 3.4]{Guzman}. For reader's convenience and later use, we give some details. \newline
\noindent \textit{Proof of Lemma $\ref{lem-non-est-d-geq3}$.}
We bound
\begin{align*}
\||x|^{-b} |u|^\alpha v\|_{S'(L^2,I)} &\leq \||x|^{-b} |u|^\alpha v\|_{S'(L^2(B),I)} + \||x|^{-b} |u|^\alpha v\|_{S'(L^2(B^c),I)} =: A_1 + A_2, \\
\|\nabla(|x|^{-b} |u|^\alpha u)\|_{S'(L^2,I)} & \leq \|\nabla(|x|^{-b} |u|^\alpha u)\|_{S'(L^2(B),I)} + \|\nabla(|x|^{-b} |u|^\alpha u)\|_{S'(L^2(B^c),I)} =:B_1 +B_2. 
\end{align*}
\indent \underline{\bf On $B$.} By H\"older inequality and Remark $\ref{rem singularity bound}$,
\begin{align*}
A_1 \leq \||x|^{-b} |u|^\alpha v\|_{L^{p_1'}_t(I, L^{q_1'}_x(B))} &\lesssim \||x|^{-b}\|_{L^{\gamma_1}_x(B)} \||u|^\alpha v\|_{L^{p_1'}_t(I, L^{\upsilon_1}_x)} \\
& \lesssim \|u\|^\alpha_{L^{m_1}_t(I, L^{n_1}_x)} \|v\|_{L^{p_1}_t(I, L^{q_1}_x)} \\
&\lesssim  T^{\theta_1} \|\nabla u\|^\alpha_{L^{p_1}_t(I, L^{q_1}_x)} \|v\|_{L^{p_1}_t(I, L^{q_1}_x)},
\end{align*}
provided that $(p_1, q_1)\in S$ and 
\[
\frac{1}{q_1'} = \frac{1}{\gamma_1} +\frac{1}{\upsilon_1}, \quad \frac{d}{\gamma_1} >b, \quad \frac{1}{\upsilon_1} = \frac{\alpha}{n_1} +\frac{1}{q_1}, \quad \frac{1}{p_1'} = \frac{\alpha}{m_1} +\frac{1}{p_1}, \quad \theta_1 =\frac{\alpha}{m_1} -\frac{\alpha}{p_1},
\]
and 
\[
q_1 < d, \quad \frac{1}{n_1} = \frac{1}{q_1}-\frac{1}{d}.
\]
Here the last condition ensures the Sobolev embedding $\dot{W}^{1, q_1}(\R^d) \subset L^{n_1}(\R^d)$. We see that condition $\frac{d}{\gamma_1}>b$ implies
\begin{align}
\frac{d}{\gamma_1} = d-\frac{d(\alpha+2)}{q_1}+\alpha>b \quad \text{or}\quad q_1>\frac{d(\alpha+2)}{d+\alpha-b}. \label{condition q_1 estimate nonlinear d geq 3}
\end{align}
Let us choose \[
q_1=\frac{d(\alpha+2)}{d+\alpha-b} +\eps,
\] 
for some $0<\eps\ll 1$ to be chosen later. By taking $\eps>0$ small enough, we see that $q_1<d$ implies $d>b+2$ which is true since we are considering $d\geq 4, 0<b<2$ or $d=3, 0<b<1$. On the other hand, using $0<\alpha<\alpha^\star$ and choosing $\eps>0$ sufficiently small, we see that $2<q_1<\frac{2d}{d-2}$. It remains to check $\theta_1>0$. This condition is equivalent to
\[
\frac{\alpha}{m_1}-\frac{\alpha}{p_1}=1-\frac{\alpha+2}{p_1}>0 \quad \text{or} \quad p_1>\alpha+2.
\] 
Since $(p_1, q_1)\in S$, the above inequality implies
\[
\frac{d}{2}-\frac{d}{q_1}=\frac{2}{p_1} <\frac{2}{\alpha+2}.
\]
A direct computation shows
\[
d(\alpha+2)[ 4-2b -(d-2)\alpha] + \eps (d+\alpha-b)(4-d(\alpha+2))>0
\]
Since $\alpha \in (0, \alpha^\star)$, we see that $4-2b-(d-2)\alpha>0$. Thus, by taking $\eps>0$ sufficiently small, the above inequality holds true. Therefore, we have for a sufficiently small value of $\eps$,
\begin{align}
A_1 \lesssim T^{\theta_1}\|\nabla u\|^\alpha_{S(L^2,I)} \|u\|_{S(L^2, I	)}. \label{non-est-1-d-geq3-prof1}
\end{align}
We next bound
\[
B_1 \leq \||x|^{-b} \nabla(|u|^\alpha u)\|_{S'(L^2(B),I)} + \||x|^{-b-1} |u|^\alpha u\|_{S'(L^2(B),I)}=:B_{11}+B_{12}.
\]
The term $B_{11}$ is treated similarly as for $A_1$ by using the fractional chain rule. We obtain
\begin{align}
B_{11} \lesssim T^{\theta_1} \|\nabla u\|^{\alpha+1}_{S(L^2, I)}, \label{non-est-2-d-geq3-prof1}
\end{align}
provided $\eps>0$ is taken small enough. Using Remark $\ref{rem singularity bound}$, we estimate
\begin{align*}
B_{12}\leq \||x|^{-b-1}|u|^\alpha u\|_{L^{p_1'}_t(I, L^{q_1'}_x(B))} &\lesssim \||x|^{-b-1}\|_{L^{\gamma_1}_x(B)} \||u|^\alpha u\|_{L^{p_1'}_t(I, L^{\upsilon_1}_x)}\\
&\lesssim \|u\|^\alpha_{L^{m_1}_t(I, L^{n_1}_x)} \|u\|_{L^{p_1}_t(I, L^{n_1}_x)} \\
&\lesssim T^{\theta_1}\|\nabla u\|^{\alpha+1}_{L^{p_1}_t(I, L^{q_1}_x)}, 
\end{align*}
provided that $(p_1, q_1)\in S$ and 
\[
\frac{1}{q_1'}=\frac{1}{\gamma_1} + \frac{1}{\upsilon_1}, \quad \frac{d}{\gamma_1}>b+1, \quad \frac{1}{\upsilon_1}=\frac{\alpha+1}{n_1}, \quad \frac{1}{p_1'}=\frac{\alpha}{m_1}+\frac{1}{p_1}, \quad \theta_1=\frac{\alpha}{m_1}-\frac{\alpha}{p_1},
\]
and
\[
q_1 <d, \quad \frac{1}{n_1}=\frac{1}{q_1}-\frac{1}{d}. 
\]
We see that
\[
\frac{d}{\gamma_1}= d-\frac{d(\alpha+2)}{q_1}+\alpha+1>b+1 \quad \text{or}\quad q_1>\frac{d(\alpha+2)}{d+\alpha-b}.
\]
The last condition is similar to $(\ref{condition q_1 estimate nonlinear d geq 3})$. Thus, by choosing $q_1$ as above, we obtain for $\eps>0$ small enough,
\begin{align}
B_{12} \lesssim T^{\theta_1}\|\nabla u\|^{\alpha+1}_{S(L^2,I)}. \label{non-est-2-d-geq3-prof2}
\end{align}
\indent \underline{\bf On $B^c$.} Let us choose the following Schr\"odinger admissible pair
\[
p_2=\frac{4(\alpha+2)}{(d-2)\alpha}, \quad q_2=\frac{d(\alpha+2)}{d+\alpha}.
\]
Let $m_2, n_2$ be such that
\begin{align}
\frac{1}{q_2'}=\frac{\alpha}{n_2} + \frac{1}{q_2}, \quad \frac{1}{p_2'}=\frac{\alpha}{m_2} + \frac{1}{p_2}. \label{choose m_2 n_2 estimate nonlinearity d geq 3}
\end{align}
A direct computation shows
\[
\theta_2:=\frac{\alpha}{m_2}-\frac{\alpha}{p_2}=1-\frac{\alpha+2}{p_2}=1-\frac{(d-2)\alpha}{4}>0.
\]
Note that in our consideration, we always have $(d-2)\alpha <4$. Moreover, it is easy to check that
\[
\frac{1}{n_2}=\frac{1}{q_2}-\frac{1}{d}.
\]
It allows us to use the Sobolev embedding $\dot{W}^{1, q_2}(\R^d) \subset L^{n_2}(\R^d)$. By H\"older inequality with $(\ref{choose m_2 n_2 estimate nonlinearity d geq 3})$,
\begin{align}
A_2\leq \||x|^{-b}|u|^\alpha v\|_{L^{p_2'}_t(I, L^{q_2'}_x(B^c))} &\lesssim \||x|^{-b}\|_{L^\infty_x(B^c)} \||u|^\alpha v\|_{L^{p_2'}_t(I, L^{q_2'}_x)} \nonumber \\
&\lesssim \|u\|^\alpha_{L^{m_2}_t(I, L^{n_2}_x)}\|v\|_{L^{p_2}_t(I, L^{q_2}_x)} \nonumber \\
&\lesssim T^{\theta_2} \|\nabla u\|^\alpha_{L^{p_2}_t(I, L^{q_2}_x)} \|v\|_{L^{p_2}_t(I, L^{q_2}_x)}. \nonumber
\end{align}
We thus get
\[
A_2 \lesssim T^{\theta_2} \|\nabla u\|^\alpha_{S(L^2, I)} \|v\|_{S(L^2, I)}.\label{non-est-1-d-geq3-prof2}
\]
We now bound
\[
B_2 \leq \||x|^{-b} \nabla(|u|^\alpha u)\|_{S'(L^2(B^c),I)} + \||x|^{-b-1} |u|^\alpha u\|_{S'(L^2(B^c),I)}=:B_{21}+B_{22}.
\]
The term $B_{21}$ is treated similarly by using the fractional chain rule, and we obtain
\begin{align}
B_{21} \lesssim T^{\theta_2} \|\nabla u\|^{\alpha+1}_{S(L^2,I)}.\label{non-est-2-d-geq3-prof3}
\end{align}
Finally, we estimate
\begin{align}
B_{22}\leq \||x|^{-b-1}|u|^\alpha u\|_{L^{p_2'}_t(I, L^{q_2'}_x(B^c))} &\lesssim \||x|^{-b-1}\|_{L^{d}_x(B^c)} \|u\|^\alpha_{L^{m_2}_t(I, L^{n_2}_x)} \|u\|_{L^{p_2}_t(I, L^{n_2}_x)} \nonumber \\
&\lesssim T^{\theta_2}\|\nabla u\|^{\alpha+1}_{L^{p_2}_t(I, L^{q_2}_x)}. \nonumber
\end{align}
Note that $\frac{1}{q_2'}=\frac{\alpha+1}{n_2}+\frac{1}{d}$. This shows that
\[
B_{22}\lesssim T^{\theta_2}\|\nabla u\|^{\alpha+1}_{S(L^2, I)}. \label{non-est-2-d-geq3-prof4}
\]
Combining \eqref{non-est-1-d-geq3-prof1}--\eqref{non-est-2-d-geq3-prof4}, we complete the proof.
\hfill $\Box$

In the three dimensional case, we also have the following extension. 
\begin{lemma} \label{lem-non-est-d3}
Let $d=3$. Let $1\leq b<\frac{3}{2}$ and $0<\alpha<\frac{6-4b}{2b-1}$ and $I=[0,T]$. Then there exists $\theta_1, \theta_2>0$ such that
\begin{align}
\||x|^{-b} |u|^\alpha v\|_{S'(L^2,I)} &\lesssim \left(T^{\theta_1} + T^{\theta_2}\right) \|\scal{\nabla} u\|^\alpha_{S(L^2,I)} \|v\|_{S(L^2, I)}, \label{non-est-1-d3} \\
\|\nabla(|x|^{-b} |u|^\alpha u)\|_{S'(L^2,I)} & \lesssim \left(T^{\theta_1} + T^{\theta_2}\right) \|\scal{\nabla} u\|^{\alpha+1}_{S(L^2, I)}. \label{non-est-2-d3}
\end{align}
\end{lemma}

\begin{proof}
We use the notations $A_1, A_2, B_{11}, B_{12}, B_{21}$ and $B_{22}$ introduced in the proof of Lemma $\ref{lem-non-est-d-geq3}$. \newline
\indent \underline{\bf On $B$.} By H\"older inequality and Remark $\ref{rem singularity bound}$,
\begin{align*}
A_1 \leq \||x|^{-b} |u|^\alpha v\|_{L^{p_1'}_t(I, L^{q_1'}_x(B))} &\lesssim \||x|^{-b}\|_{L^{\gamma_1}_x(B)} \||u|^\alpha v\|_{L^{p_1'}_t(I, L^{\upsilon_1}_x)} \\
& \lesssim \|u\|^\alpha_{L^{m_1}_t(I, L^{n_1}_x)} \|v\|_{L^{p_1}_t(I, L^{q_1}_x)} \\
&\lesssim  T^{\theta_1} \|\scal{\nabla} u\|_{L^{p_1}_t(I, L^{q_1}_x)} \|v\|_{L^{p_1}_t(I, L^{q_1}_x)},
\end{align*}
provided that $(p_1, q_1)\in S$ and 
\[
\frac{1}{q_1'} = \frac{1}{\gamma_1} +\frac{1}{\upsilon_1}, \quad \frac{3}{\gamma_1} >b, \quad \frac{1}{\upsilon_1} = \frac{\alpha}{n_1} +\frac{1}{q_1}, \quad \frac{1}{p_1'} = \frac{\alpha}{m_1} +\frac{1}{p_1}, \quad \theta_1 =\frac{\alpha}{m_1} -\frac{\alpha}{p_1},
\]
and 
\[
q_1 \geq 3, \quad n_1 \in (q_1, \infty) \quad \text{or}\quad \frac{1}{n_1} = \frac{\tau}{q_1}, \quad \tau\in (0,1).
\]
Here the last condition ensures the Sobolev embedding $W^{1, q_1}(\R^3) \subset L^{n_1}(\R^3)$. We see that condition $\frac{3}{\gamma_1}>b$ implies
\[
\frac{3}{\gamma_1} = 3-\frac{3(2+\alpha\tau)}{q_1}>b \quad \text{or}\quad q_1>\frac{3(2+\alpha\tau)}{3-b}.
\]
Let us choose 
\[
q_1=\frac{3(2+\alpha\tau)}{3-b} +\eps,
\] 
for some $0<\eps\ll 1$ to be chosen later. Since $1\leq b<2, 0<\alpha<4-2b$ and $0<\tau <1$, it is obvious that $q_1>3$. Moreover, by taking $\eps>0$ small enough, we see that $q_1<6$. In order to make $\theta_1>0$, we need
\[
\theta_1=\frac{\alpha}{m_1}-\frac{\alpha}{p_1} = 1- \frac{\alpha+2}{p_1}>0 \quad \text{or} \quad \frac{2}{p_1}<\frac{2}{\alpha+2}.
\]
Since $(p_1, q_1)$ is Schr\"odinger admissible, it is equivalent to show
\[
\frac{3}{2}-\frac{3}{q_1}<\frac{2}{\alpha+2}. 
\]
It is then equivalent to
\[
3\left[8-4b-2b\alpha -\alpha \tau (2+3\alpha) \right] -\eps (3-b)(2+3\alpha)>0.
\]
Since $0<\eps\ll 1$, it is enough to show $f(\tau):=8-4b-2b\alpha -\alpha \tau (2+3\alpha)>0$. Note that $f(0)>0$ provided $0<\alpha <\frac{4-2b}{b}$ and $f(1)>0$ provided $0<\alpha <\frac{4-2b}{3}$. Thus, by choosing $\tau$ closed to 0, we see that $f(\tau)>0$ for $0<\alpha<\frac{4-2b}{b}$. Therefore, we get
\begin{align}
A_1 \lesssim T^{\theta_1} \|\scal{\nabla} u\|^\alpha_{S(L^2, I)} \|v\|_{S(L^2,I)}, \label{non-est-1-d3-prof1}
\end{align}
provided $\eps, \tau >0$ are taken small enough and
\[
1\leq b<2, \quad 0<\alpha <\frac{4-2b}{b}.
\]
The term $B_{11}$ is treated similarly as for $A_1$ by using the fractional chain rule. We obtain
\begin{align}
B_{11} \lesssim T^{\theta_1} \|\scal{\nabla} u\|^\alpha_{S(L^2, I)} \|\nabla u\|_{S(L^2,I)}, \label{non-est-1-d3-prof2}
\end{align}
provided $\eps, \tau >0$ is taken small enough and
\[
1\leq b<2, \quad 0<\alpha <\frac{4-2b}{b}.
\]
We next bound
\begin{align*}
B_{12}\leq \||x|^{-b-1}|u|^\alpha u\|_{L^{p_1'}_t(I, L^{q_1'}_x(B))} &\lesssim \||x|^{-b-1}\|_{L^{\gamma_1}_x(B)} \||u|^\alpha u\|_{L^{p_1'}_t(I, L^{\upsilon_1}_x)}\\
&\lesssim \|u\|^\alpha_{L^{m_1}_t(I, L^{n_1}_x)} \|u\|_{L^{p_1}_t(I, L^{n_1}_x)} \\
&\lesssim T^{\theta_1}\|\scal{\nabla} u\|^{\alpha+1}_{L^{p_1}_t(I, L^{q_1}_x)}, 
\end{align*}
provided that $(p_1, q_1)\in S$ and 
\[
\frac{1}{q_1'}=\frac{1}{\gamma_1} + \frac{1}{\upsilon_1}, \quad \frac{3}{\gamma_1}>b+1, \quad \frac{1}{\upsilon_1}=\frac{\alpha+1}{n_1}, \quad \frac{1}{p_1'}=\frac{\alpha}{m_1}+\frac{1}{p_1}, \quad \theta_1=\frac{\alpha}{m_1}-\frac{\alpha}{p_1},
\]
and
\[
q_1 \geq 3, \quad n_1 \in (q_1, \infty) \quad \text{or} \quad \frac{1}{n_1}=\frac{\tau}{q_1}, \quad \tau \in (0,1). 
\]
We see that
\[
\frac{3}{\gamma_1}= 3-\frac{3(1+(\alpha+1)\tau)}{q_1}>b+1 \quad \text{or}\quad q_1>\frac{3(1+(\alpha+1)\tau)}{2-b}.
\]
Let us choose 
\[
q_1=\frac{3(1+(\alpha+1)\tau)}{2-b}+\eps,
\] 
for some $0<\eps \ll 1$ to be determined later. Since we are considering $1\leq b<\frac{3}{2}$, by choosing $\tau$ closed to 0 and taking $\eps>0$ small enough, we can check that $3<q_1<6$. It remains to show $\theta_1>0$. As above, we need $\frac{2}{p_1}<\frac{2}{\alpha+2}$, and it is equivalent to
\[
\frac{3}{2}-\frac{3}{q_1}<\frac{2}{\alpha+2}.
\]
It is in turn equivalent to 
\[
3\left[6-4b+\alpha(1-2b) -(\alpha+1)\tau(2+3\alpha) \right] - \eps (2-b)(2+3\alpha)>0.
\]
Since $0<\eps \ll 1$, it is enough to show $g(\tau):= 6-4b +\alpha(1-2b) -(\alpha+1) \tau (2+3\alpha)>0$. Note that $g(0)>0$ provided $0<\alpha <\frac{6-4b}{2b-1}$. Thus, by choosing $\tau$ closed to 0, we see that $g(\tau)>0$ for $0<\alpha<\frac{6-4b}{2b-1}$. Therefore,
\begin{align}
B_{12} \lesssim T^{\theta_1} \|\scal{\nabla} u\|^{\alpha+1}_{S(L^2,I)},\label{non-est-1-d3-prof3}
\end{align}
provided $\eps, \tau >0$ are small enough and
\[
1\leq b <\frac{3}{2}, \quad 0<\alpha <\frac{6-4b}{2b-1}.
\]
\indent \underline{\bf On $B^c$.} Let us choose the following Schr\"odinger admissible pair
\[
p_2=\frac{4(\alpha+2)}{\alpha}, \quad q_2=\frac{3(\alpha+2)}{3+\alpha}.
\]
Let $m_2, n_2$ be such that
\begin{align}
\frac{1}{q_2'}=\frac{\alpha}{n_2} + \frac{1}{q_2}, \quad \frac{1}{p_2'}=\frac{\alpha}{m_2} + \frac{1}{p_2}. \label{choose m_2 n_2}
\end{align}
A direct computation shows
\[
\theta_2:=\frac{\alpha}{m_2}-\frac{\alpha}{p_2}=1-\frac{\alpha}{4}>0.
\]
Note that in our consideration $1\leq b<\frac{3}{2}, 0<\alpha<\frac{6-4b}{2b-1}$, we always have $\alpha <4$. Moreover, it is easy to check that
\[
\frac{1}{n_2}=\frac{1}{q_2}-\frac{1}{3}.
\]
It allows us to use the Sobolev embedding $W^{1, q_2}(\R^3) \subset L^{n_2}(\R^3)$. By H\"older inequality with $(\ref{choose m_2 n_2})$,
\begin{align}
A_2\leq \||x|^{-b}|u|^\alpha v\|_{L^{p_2'}_t(I, L^{q_2'}_x(B^c))} &\lesssim \||x|^{-b}\|_{L^\infty_x(B^c)} \||u|^\alpha v\|_{L^{p_2'}_t(I, L^{q_2'}_x)} \nonumber \\
&\lesssim \|u\|^\alpha_{L^{m_2}_t(I, L^{n_2}_x)}\|v\|_{L^{p_2}_t(I, L^{q_2}_x)} \nonumber \\
&\lesssim T^{\theta_2} \|\scal{\nabla} u\|^\alpha_{L^{p_2}_t(I, L^{q_2}_x)} \|v\|_{L^{p_2}_t(I, L^{q_2}_x)}.\nonumber
\end{align}
We thus get
\begin{align}
A_2 \lesssim T^{\theta_2} \|\scal{\nabla} u\|^\alpha_{S(L^2, I)} \|v\|_{S(L^2, I)}.\label{non-est-1-d3-prof4}
\end{align}
The term $B_{21}$ is treated similarly by using the fractional chain rule, and we obtain
\begin{align}
B_{21} \lesssim T^{\theta_2} \|\scal{\nabla} u\|^\alpha_{S(L^2, I)} \|\nabla u\|_{S(L^2, I)}.\label{non-est-1-d3-prof5}
\end{align}
Finally, we estimate
\begin{align}
B_{22}\leq \||x|^{-b-1}|u|^\alpha u\|_{L^{p_2'}_t(I, L^{q_2'}_x(B^c))} &\lesssim \||x|^{-b-1}\|_{L^{3}_x(B^c)} \|u\|^\alpha_{L^{m_2}_t(I, L^{n_2}_x)} \|u\|_{L^{p_2}_t(I, L^{n_2}_x)} \nonumber \\
&\lesssim T^{\theta_2}\|\scal{\nabla} u\|^{\alpha+1}_{L^{p_2}_t(I, L^{q_2}_x)}. \nonumber
\end{align}
This implies
\begin{align}
B_{22}\lesssim T^{\theta_2}\|\scal{\nabla} u\|^{\alpha+1}_{S(L^2, I)}. \label{non-est-1-d3-prof6}
\end{align}
Collecting \eqref{non-est-1-d3-prof1}--\eqref{non-est-1-d3-prof6}, we complete the proof.
\end{proof}

\begin{lemma} \label{lem-non-est-d2}
Let $d=2$. Let $0<b<1$ and $0<\alpha<\infty$ and $I=[0,T]$. Then there exists $\theta_1, \theta_2>0$ such that
\begin{align}
\||x|^{-b} |u|^\alpha v\|_{S'(L^2,I)} &\lesssim \left(T^{\theta_1}+T^{\theta_2}\right) \|\scal{\nabla} u\|^\alpha_{S(L^2,I)} \|v\|_{S(L^2, I)}, \label{non-est-1-d2} \\
\|\nabla(|x|^{-b} |u|^\alpha u)\|_{S'(L^2,I)} & \lesssim \left(T^{\theta_1}+T^{\theta_2}\right) \|\scal{\nabla} u\|^{\alpha+1}_{S(L^2, I)}. \label{non-est-2-d2}
\end{align}
\end{lemma}

\begin{remark} \label{rem-non-est-d2}
In \cite{Guzman}, Guzm\'an proved this result with $\theta_1 =\theta_2$ under the assumption $0<b<\frac{2}{3}$. Here we extend it to $0<b<1$. 
\end{remark}

\begin{remark} \label{rem-non-est-d1}
By using Strichartz estimate, we can not obtain a similar result as in Lemma \ref{lem-non-est-d-geq3}, Lemma \ref{lem-non-est-d3} and Lemma \ref{lem-non-est-d2} for the case $d=1$. The reason for this is the singularity $|x|^{-b-1}$ on $B$. To bound this term in a Lebesgue space $L^{\gamma}$ with $1\leq \gamma \leq \infty$, we need
\[
\frac{d}{\gamma} > b+1. 
\]
This implies that we need at least $d>b+1$, which does not hold when $d=1$. 
\end{remark}

\noindent \textit{Proof of Lemma \ref{lem-non-est-d2}.} 
We continue to use the notations $A_1, A_2, B_{11}, B_{12}, B_{21}$ and $B_{22}$ introduced in the proof of Lemma \ref{lem-non-est-d-geq3}. \newline
\indent \underline{\bf On $B$.} By H\"older inequality and Remark $\ref{rem singularity bound}$, 
\begin{align*}
A_1 \leq \||x|^{-b} |u|^\alpha v\|_{L^{p_1'}_t(I, L^{q_1'}_x(B))} &\lesssim \||x|^{-b}\|_{L^{\gamma_1}_x(B)} \||u|^\alpha v\|_{L^{p_1'}_t(I, L^{\upsilon_1}_x)} \\
&\lesssim \|u\|^\alpha_{L^{m_1}_t(I, L^{n_1}_x)} \|v\|_{L^\infty_t(I, L^2_x)} \\
&\lesssim \|\scal{\nabla}u\|^\alpha_{L^{m_1}_t(I, L^2_x)} \|v\|_{L^\infty_t(I, L^2_x)} \\
&\lesssim T^{\theta_1} \|\scal{\nabla} u\|^\alpha_{L^\infty_t(I, L^2_x)} \|v\|_{L^\infty_t(I, L^2_x)},
\end{align*}
provided that $(p_1, q_1)\in S$ and 
\[
\frac{1}{q_1'}=\frac{1}{\gamma_1} +\frac{1}{\upsilon_1}, \quad \frac{2}{\gamma_1}>b, \quad \frac{1}{\upsilon_1} = \frac{\alpha}{n_1} + \frac{1}{2}, \quad \frac{1}{p_1'}=\frac{\alpha}{m_1}=\theta_1,
\]
and
\[
n_1 \in (2, \infty) \quad \text{or} \quad \frac{1}{n_1} =\frac{\tau}{2}, \quad \tau \in (0,1).
\]
The last condition allows us to use the Sobolev embedding $W^{1,2}(\R^2) \subset L^{n_1}(\R^2)$. The condition $\frac{2}{\gamma_1}>b$ implies
\[
\frac{2}{\gamma_1}=1-\frac{2}{q_1} - \alpha \tau >b \quad \text{or} \quad \frac{2}{q_1}<1-b-\alpha \tau.
\]
Note that since $0<b<1$, by taking $\tau>0$ small enough, we see that $1-b-\alpha \tau>0$. Let us choose 
\[
q_1=\frac{2}{1-b-\alpha \tau} +\eps,
\]
for some $0<\eps\ll 1$ to be chosen later. It is obvious that $2<q_1<\infty$ and $\theta_1>0$. Therefore, we obtain
\begin{align}
A_1 \lesssim T^{\theta_1} \|\scal{\nabla} u\|^\alpha_{S(L^2,I)} \|v\|_{S(L^2,I)}. \label{non-est-1-d2-prof1}
\end{align}
The term $B_{11}$ is again treated similarly as for $A_1$ above using the fractional chain rule. We get
\begin{align}
B_{11} \lesssim T^{\theta_1} \|\scal{\nabla} u\|^\alpha_{S(L^2,I)} \|\nabla u\|_{S(L^2,I)}. \label{non-est-1-d2-prof2}
\end{align}
We continue to bound
\begin{align*}
B_{12} \leq \||x|^{-b-1} |u|^\alpha u\|_{L^{p_1'}_t(I, L^{q_1'}_x(B))} &\lesssim \||x|^{-b-1}\|_{L^{\gamma_1}_x(B)} \||u|^\alpha u\|_{L^{p_1'}_t(I, L^{\upsilon_1}_x)} \\
&\lesssim	 \|u\|^\alpha_{L^{m_1}_t(I, L^{n_1}_x)} \|u\|_{L^\infty_t(I, L^{n_1}_x)} \\
&\lesssim  \|\scal{\nabla} u\|^\alpha_{L^{m_1}_t(I, L^2_x)} \|\scal{\nabla}u\|_{L^\infty_t(I, L^2_x)} \\
&\lesssim T^{\theta_1} \|\scal{\nabla} u\|^{\alpha}_{L^\infty_t(I, L^2_x)} \|\scal{\nabla} u\|_{L^\infty_t(I, L^2_x)},
\end{align*}
provided that $(p_1, q_1)\in S$ and 
\[
\frac{1}{q_1'} =\frac{1}{\gamma_1} +\frac{1}{\upsilon_1}, \quad \frac{2}{\gamma_1}>b+1, \quad \frac{1}{\upsilon_1} =\frac{\alpha+1}{n_1}, \quad \frac{1}{p_1'}=\frac{\alpha}{m_1}=\theta_1,
\]
and 
\[
n_1 \in (2, \infty) \quad \text{or} \quad \frac{1}{n_1} =\frac{\tau}{2}, \quad \tau \in (0,1).
\]
The condition $\frac{2}{\gamma_1}>b+1$ implies
\[
\frac{2}{\gamma_1} = 2-\frac{2}{q_1}- (\alpha+1)\tau > b+1 \quad \text{or} \quad \frac{2}{q_1}<1-b-(\alpha+1)\tau.
\]
Since $0<b<1$, by choosing $\tau$ closed to 0, we see that $1-b-(\alpha+1)\tau>0$. Let us choose 
\[
q_1=\frac{2}{1-b-(\alpha+1)\tau} +\eps,
\] 
for some $0<\eps \ll 1$ to be chosen later. It is obvious that $2<q_1<\infty$ and $\theta_1>0$. Thus, we obtain
\begin{align}
B_{12} \lesssim T^{\theta_1} \|\scal{\nabla} u\|^{\alpha+1}_{S(L^2,I)}. \label{non-est-1-d2-prof3}
\end{align} 
\indent \underline{\bf On $B^c$.} Let us choose the following Schr\"odinger admissible pair
\[
p_2 = \frac{2(\alpha+2)}{\alpha}, \quad q_2 =\alpha+2.
\]
It is easy to see that $\frac{1}{q_2'} = \frac{\alpha+1}{q_2}$. By H\"older's inequality,
\begin{align*}
A_2 \leq \||x|^{-b} |u|^\alpha v\|_{L^{p_2'}_t(I, L^{q_2'}_x(B^c))} &\lesssim \||x|^{-b}\|_{L^{\infty}_x(B^c)} \||u|^\alpha v\|_{L^{p_2'}_t(I, L^{q_2'}_x)} \\
&\lesssim \|u\|^\alpha_{L^{m_2}_t(I, L^{q_2}_x)}\|v\|_{L^{p_2}_t(I, L^{q_2}_x)} \\
&\lesssim \|\scal{\nabla} u\|^\alpha_{L^{m_2}_t(I, L^2_x)} \|v\|_{L^{p_2}_t(I, L^{q_2}_x)} \\
&\lesssim T^{\theta_2} \|\scal{\nabla} u\|^\alpha_{L^\infty_t(I, L^2_x)} \|v\|_{L^{p_2}_t(I, L^{q_2}_x)},
\end{align*}
where
\[
\frac{1}{p'_2}=\frac{\alpha}{m_2}+\frac{1}{p_2}, \quad \theta_2=\frac{\alpha}{m_2}=\frac{2}{\alpha+2}>0.
\]
We thus get
\begin{align}
A_2 \lesssim T^{\theta_2} \|\scal{\nabla} u\|^\alpha_{S(L^2, I)} \|v\|_{S(L^2, I)}. \label{non-est-1-d2-prof4}
\end{align}
By using the fractional chain rule and estimating as for $A_2$, we get
\begin{align}
B_{21} \lesssim T^{\theta_2} \|\scal{\nabla} u\|^\alpha_{S(L^2, I)} \|\nabla u\|_{S(L^2, I)}. \label{non-est-1-d2-prof5}
\end{align}
Finally, we bound
\begin{align*}
B_{22} \leq \||x|^{-b-1} |u|^\alpha u\|_{L^{p_2'}_t(I, L^{q_2'}_x(B^c))} &\lesssim \||x|^{-b-1}\|_{L^{\infty}_x(B^c)} \||u|^\alpha u\|_{L^{p_2'}_t(I, L^{q_2'}_x)} \\
&\lesssim \|u\|^\alpha_{L^{m_2}_t(I, L^{q_2}_x)} \|u\|_{L^{p_2}_t(I, L^{q_2}_x)} \\
&\lesssim \|\scal{\nabla}u\|^\alpha_{L^{m_2}_t(I, L^2_x)} \|u\|_{L^{p_2}_t(I, L^{q_2}_x)} \\
&\lesssim T^{\theta_2} \|\scal{\nabla}u\|^\alpha_{L^\infty_t(I, L^2_x)} \|u\|_{L^{p_2}_t(I, L^{q_2}_x)} \\
&\lesssim T^{\theta_2} \|\scal{\nabla} u\|^\alpha_{L^\infty_t(I, L^2_x)} \|\scal{\nabla}u\|_{L^{p_2}_t(I, L^{q_2}_x)}.
\end{align*}
Where $m_2, \theta_2$ are as in term $A_2$. Thus, we obtain
\begin{align}
B_{22} \lesssim T^{\theta_2} \|\scal{\nabla} u\|^{\alpha+1}_{S(L^2, I)}. \label{non-est-1-d2-prof6}
\end{align}
Collecting \eqref{non-est-1-d2-prof1}--\eqref{non-est-1-d2-prof6}, we complete the proof. 
\hfill $\Box$


\indent We are now able to prove Theorem \ref{theorem local existence}. From now on, we denote for any spacetime slab $I \times \R^d$,
\begin{align}
\|u\|_{S(I)}:= \|\scal{\nabla} u\|_{S(L^2, I)} =  \|u\|_{S(L^2, I)} + \|\nabla u\|_{S(L^2,I)}. \label{define h1 strichartz norm}
\end{align}
\paragraph{\bf Proof of Theorem \ref{theorem local existence}.}
We follow the standard argument (see e.g. \cite[Chapter 4]{Cazenave}). Let
\[
X= \Big\{ u \in C_t(I, H^1_x) \cap L^p_t(I, W^{1, q}_x), \forall (p,q) \in S \ | \ \|u\|_{S(I)} \leq M \Big\},
\]
equipped with the distance
\[
d(u,v) = \|u-v\|_{S(L^2, I)},
\]
where $I=[0,T]$ and $T, M>0$ to be chosen later. By the Duhamel formula, it suffices to prove that the functional
\[
\Phi(u)(t) =e^{it\Delta}u_0 + i\mu\int_0^t e^{i(t-s)\Delta} |x|^{-b} |u(s)|^\alpha u(s) ds
\]
is a contraction on $(X,d)$. By Strichartz estimates, we have
\begin{align*}
\|\Phi(u)\|_{S(I)} &\lesssim \|u_0\|_{H^1_x} + \||x|^{-b} |u|^\alpha u\|_{S'(L^2, I)} + \|\nabla (|x|^{-b}|u|^\alpha u)\|_{S'(L^2, I)}, \\
\|\Phi(u) -\Phi(v)\|_{S(L^2,I)} &\lesssim \||x|^{-b}(|u|^\alpha u -|v|^\alpha v)\|_{S'(L^2,I)}.
\end{align*}
Applying Lemmas \ref{lem-non-est-d-geq3}, \ref{lem-non-est-d3}, \ref{lem-non-est-d2}, we get for some $\theta_1, \theta_2>0$,
\begin{align*}
\|\Phi(u) \|_{S(I)} &\lesssim \|u_0\|_{H^1_x} + \left(T^{\theta_1} + T^{\theta_2}\right) \|u\|^{\alpha+1}_{S(I)}, \\
\|\Phi(u) - \Phi(v)\|_{S(L^2, I)}  &\lesssim \left(T^{\theta_1} + T^{\theta_2}\right) \left(\|u\|^\alpha_{S(I)} + \|v\|^\alpha_{S(I)}\right) \|u-v\|_{S(L^2, I)}.
\end{align*}
This shows that for $u,v \in X$, there exists $C>0$ independent of $T$ and $u_0 \in H^1_x$ such that
\begin{align*}
\|\Phi(u)\|_{S(I)} &\leq C\|u_0\|_{H^1_x} + C\left(T^{\theta_1}+T^{\theta_2}\right) M^{\alpha+1}, \\
d(\Phi(u), \Phi(v)) &\leq C \left(T^{\theta_1}+T^{\theta_2}\right) M^\alpha d(u,v). 
\end{align*}
If we set $M= 2C \|u_0\|_{H^1_x}$ and choose $T>0$ so that
\[
C\left(T^{\theta_1}+T^{\theta_2}\right) M^\alpha \leq \frac{1}{2},
\]
then $\Phi$ is a strict contraction on $(X,d)$. The proof is complete.
\hfill $\Box$

\section{Pseudo-conformal conservation law} 
\setcounter{equation}{0}
\label{S4}

In this section, we firstly derive the virial identity and then use it to show the pseudo-conformal conservation law related to the defocusing (INLS). The proof is based on the standard technique (see e.g. \cite{Cazenave, Tao}). Given a smooth real valued function $a$, we define the virial potential by
\begin{align}
V_a(t):= \int a(x)|u(t,x)|^2 dx. \label{virial potential} 
\end{align}
By a direct computation, we have the following result (see e.g. \cite[Lemma 5.3]{TVZ} for the proof).
\begin{lemma}[\cite{TVZ}]  \label{lem derivative virial potential}
If $u$ is a smooth-in-time and Schwartz-in-space solution to 
\[
i\partial_t u +\Delta u = N(u),
\]
with $N(u)$ satisfying $\ima{(N(u)\overline{u})}=0$, then we have
\begin{align}
\frac{d}{dt} V_a (t)= 2 \int_{\R^d}\nabla a(x) \cdot  \ima	{(\overline{u}(t,x) \nabla u(t,x))} dx, \label{first derivative viral potential}
\end{align}
and
\begin{equation}
\begin{aligned}
\frac{d^2}{dt^2} V_a(t) = -\int \Delta^2 a(x) |u(t,x)|^2  dx & +  4 \sum_{j,k=1}^d \int \partial^2_{jk} a(x) \rea{(\partial_k u(t,x) \partial_j \overline{u}(t,x))} dx  \\
&+ 2\int \nabla a(x)\cdot \{N(u), u\}_p(t,x) dx, 
\end{aligned} \label{second derivative virial potential} 
\end{equation}
where $\{f, g\}_p :=\rea{(f\nabla \overline{g} - g \nabla \overline{f})}$ is the momentum bracket.
\end{lemma}
 
\begin{corollary}\label{coro derivative virial potential}
If $u$ is a smooth-in-time and Schwartz-in-space solution to the defocusing \emph{(INLS)}, then we have 
\begin{equation}
\begin{aligned}
\frac{d^2}{dt^2} V_a(t) &= -\int \Delta^2 a(x) |u(t,x)|^2  dx  +  4 \sum_{j,k=1}^d \int \partial^2_{jk} a(x) \rea{(\partial_k u(t,x) \partial_j \overline{u}(t,x))} dx  \\
&\mathrel{\phantom{=}}+\frac{2\alpha}{\alpha+2} \int \Delta a(x) |x|^{-b} |u(t,x)|^{\alpha+2} dx -\frac{4}{\alpha+2} \int \nabla a(x) \cdot \nabla(|x|^{-b}) |u(t,x)|^{\alpha+2} dx. 
\end{aligned} \label{second derivative virial potential application} 
\end{equation}
\end{corollary}

\begin{proof}
Applying Lemma $\ref{lem derivative virial potential}$ with $N(u)=F(x,u)=|x|^{-b} |u|^\alpha u$. Note that
\[
\{N(u), u\}_p = -\frac{\alpha}{\alpha+2} \nabla(|x|^{-b} |u|^{\alpha+2}) -\frac{2}{\alpha+2} \nabla(|x|^{-b}) |u|^{\alpha+2}.
\] 
\end{proof}

We now have the following virial identity for the defocusing (INLS). 
\begin{proposition} \label{prop virial identity}
Let $u_0 \in H^1(\R^d)$ be such that $|x|u_0 \in L^2(\R^d)$ and $u$ the corresponding global solution to the defocusing \emph{(INLS)}. Then $|x| u \in C(\R, L^2(\R^d))$. Moreover, for any $t\in \R$,
\begin{align}
\frac{d^2}{dt^2} \|x u(t)\|^2_{L^2_x} = 16 E(u_0) + 4(d\alpha+2b-4)G(t), \label{virial identity}
\end{align}
where $G$ is given in $(\ref{define G})$.
\end{proposition}

\begin{proof}
The first claim follows from the standard approximation argument, we omit the proof and refer the reader to \cite[Proposition 6.5.1]{Cazenave} for more details. It remains to show $(\ref{virial identity})$. Applying Corollary $\ref{coro derivative virial potential}$ with $a(x)=|x|^2$, we have
\begin{align*}
\frac{d^2}{dt^2}V_a(t)=\frac{d^2}{dt^2} \|x u(t)\|^2_{L^2_x} &= 8 \|\nabla u(t)\|^2_{L^2_x} + 4(d\alpha +2b)G(t) \\
&= 16 E(u(t)) + 4(d\alpha +2b-4)G(t).
\end{align*}
The result follows by using the conservation of energy.
\end{proof}

An application of the virial identity is the following ``pseudo-conformal conservation law" for the defocusing (INLS). 
\begin{lemma}
Let $u_0 \in H^1(\R^d)$ be such that $|x| u_0 \in L^2(\R^d)$ and $u$ the corresponding global solution to the defocusing \emph{(INLS)}. Then for any $t\in \R$,
\begin{align}
\|(x+2it\nabla) u(t)\|^2_{L^2_x} +8t^2 G(t) = \|x u_0\|^2_{L^2_x} + 4(4-2b-d\alpha)\int_0^t s G(s) ds. \label{pseudoconformal conservation law}
\end{align}
\end{lemma}

\begin{proof}
Set 
\[
f(t) := \|(x+2it\nabla) u(t)\|^2_{L^2_x} + 8t^2 G(t).
\]
By $(\ref{first derivative viral potential})$, we see that
\begin{align*}
\|(x+2it\nabla) u(t)\|^2_{L^2_x} &= \|x u(t)\|^2_{L^2_x} + 4t^2 \|\nabla u(t)\|^2_{L^2_x} -4t \int \ima{(\overline{u}(t,x) x \cdot \nabla u(t,x) )}dx \\
&= \|x u(t)\|^2_{L^2_x} + 4t^2 \|\nabla u(t)\|^2_{L^2_x} - t \frac{d}{dt} \|x u(t)\|^2_{L^2_x}.
\end{align*}
Thus, the conservation of energy implies
\[
f(t) = \|x u(t)\|^2_{L^2_x} +8t^2 E(u(t)) -t\frac{d}{dt} \|x u(t)\|^2_{L^2_x} = \|x u(t)\|^2_{L^2_x} + 8t^2 E(u_0) -t\frac{d}{dt} \|x u(t)\|^2_{L^2_x}.
\]
Applying $(\ref{virial identity})$, we get
\[
f'(t) = \frac{d}{dt} \|x u(t)\|^2_{L^2_x} + 16 t E(u_0) - \frac{d}{dt} \|x u(t)\|^2_{L^2_x} -t \frac{d^2}{dt^2} \|x u(t)\|^2_{L^2_x} = 4(4-2b-d\alpha) t G(t).
\]
Taking integration on $(0,t)$, we obtain $(\ref{pseudoconformal conservation law})$. 
\end{proof}
\begin{remark} \label{rem extension pseudoconformal conservation law}
This result extends the pseudo-conformal conservation law for the classical (i.e. $b=0$) nonlinear Schr\"odinger equation (see e.g. \cite[Theorem 7.2.1]{Cazenave}). Note that $(\ref{pseudoconformal conservation law})$ is a real conservation law only when $\alpha =\frac{4-2b}{d}$.
\end{remark}

\begin{remark} \label{rem pseudoconformal conservation law}
It is easy to see that if $t\ne 0$, then 
\begin{align}
(x+2it\nabla) u(t,x) = 2it e^{i\frac{|x|^2}{4t}} \nabla\Big(e^{-i\frac{|x|^2}{4t}} u(t,x)\Big), \label{change variable}
\end{align}
and
\[
\|(x+2it\nabla) u(t)\|^2_{L^2_x} = 4t^2 \Big\|\nabla\Big(e^{-i\frac{|x|^2}{4t}} u(t,x)\Big) \Big\|^2_{L^2_x}.
\]
Therefore, if we set
\begin{align}
v(t,x):= e^{-i\frac{|x|^2}{4t}} u(t,x), \label{define v}
\end{align}
then
\begin{align*}
\|(x+2it\nabla) u(t)\|^2_{L^2_x} = 4t^2 \|\nabla v(t)\|^2_{L^2_x},
\end{align*}
and $(\ref{pseudoconformal conservation law})$ becomes
\begin{align}
8t^2 E(v(t)) = \|x u_0\|^2_{L^2_x} +4(4-2b-d\alpha) \int_0^t s G(s)ds. \label{equivalence pseudoconformal conservation law}
\end{align}
\end{remark}

\begin{remark} \label{rem change variable}
Let $F(x,u) = |x|^{-b} |u|^\alpha u$. It follows from $(\ref{change variable})$ that
\begin{align}
|(x+2it\nabla) F(x,u)| = 2|t| \Big|\nabla\Big(e^{-i\frac{|x|^2}{4t}} F(x,u) \Big) \Big| = 2|t| |\nabla F(x,v)|, \label{change variable property 1}
\end{align}
where $v$ is given in $(\ref{define v})$. Using the facts $|v|=|u|$ and $2|t| |\nabla v| = |(x+2it\nabla) u|$, we also have
\begin{align}
\|v\|_{L^q_x} = \|u\|_{L^q_x}, \quad 2|t| \|\nabla v\|_{L^q_x} = \|(x+2it\nabla) u\|_{L^q_x}. \label{change variable property 2}
\end{align}
\end{remark}

\section{Decay of solutions} 
\setcounter{equation}{0}
\label{S5}

In this section, we will give the proof of the decaying property given in Theorem $\ref{theorem decay property}$. We follows the standard argument of Ginibre and Velo \cite{GV} (see also \cite[Chapter 7]{Cazenave}).
\paragraph{\bf Proof of Theorem \ref{theorem decay property}.} 
We have from \eqref{equivalence pseudoconformal conservation law} that
\begin{align}
8t^2 E(v(t))=8t^2 \Big(\frac{1}{2} \|\nabla v(t)\|^2_{L^2_x} + G(t) \Big) = \|x u_0\|^2_{L^2_x}  + 4(4-2b-d\alpha) \int_0^t s G(s) ds, \label{equivalence 2 pseudoconformal conservation law}
\end{align}
for all $t\in \R$, where $v$ is defined in $(\ref{define v})$. \newline
\indent If $\alpha \in [\alpha_\star, \alpha^\star)$, then $(\ref{equivalence 2 pseudoconformal conservation law})$ implies
\[
4t^2 \|\nabla v(t)\|^2_{L^2_x} \leq \|x u_0\|^2_{L^2_x}, 
\]
for all $t\in \R$. Hence, $\|\nabla v(t)\|_{L^2_x} \lesssim |t|^{-1}$ for $t \in \R \backslash \{0\}$. Using $(\ref{change variable property 2})$, Gagliardo-Nirenberg's inequality and the conservation of mass, we have
\begin{align*}
\|u(t)\|_{L^q_x} =\|v(t)\|_{L^q_x}  &\lesssim \|\nabla v(t)\|_{L^2_x}^{d\left(\frac{1}{2}-\frac{1}{q}\right)} \|v(t)\|^{1-d\left(\frac{1}{2}-\frac{1}{q} \right)}_{L^2_x}  \\
&\lesssim |t|^{-d\left(\frac{1}{2}-\frac{1}{q}\right)} \|u_0\|_{L^2_x}^{1-d\left(\frac{1}{2}-\frac{1}{q} \right)} \lesssim |t|^{-d\left(\frac{1}{2}-\frac{1}{q}\right)}.
\end{align*} 
This proves the first claim. \newline
\indent We now assume $\alpha \in (0, \alpha_\star)$.  Note that it suffices to show the decay for $|t|\geq 1$, the one for $|t|<1$ follows by H\"older's inequality and the conservations of mass and energy. Let us consider only the case $t\geq 1$, the case $t\leq-1$ is treated similarly. By taking $t=1$ in $(\ref{equivalence 2 pseudoconformal conservation law})$, we see that
\[
8E(v(1)) = \|x u_0\|^2_{L^2_x} + 4(4-2b-d\alpha) \int_0^1 s G(s) ds.
\]
Thus,
\[
8t^2 E(v(t)) = 8E(v(1)) + 4(4-2b-d\alpha) \int_1^t s G(s) ds. 
\]
This implies
\[
g(t):= t^2 G(t) \leq E(v(1)) +\frac{4-2b-d\alpha}{2} \int_1^t \frac{1}{s} g(s) ds.
\]
Applying Gronwall's inequality, we obtain
\[
g(t) \lesssim t^{\frac{4-2b-d\alpha}{2}}, \quad \text{hence} \quad G(t) \lesssim t^{\frac{-2b-d\alpha}{2}}.
\]
By $(\ref{equivalence 2 pseudoconformal conservation law})$, we have
\[
4t^2 \|\nabla v(t)\|^2_{L^2_x} \lesssim \|x u_0\|^2_{L^2_x} + 4(4-2b-d\alpha) \int_0^t s^{\frac{2-2b-d\alpha}{2}}ds \lesssim 1 + t^{\frac{4-2b-d\alpha}{2}},
\]
or
\[
\|\nabla v(t)\|_{L^2_x} \lesssim t^{-\frac{2b+d\alpha}{4}}. 
\]
By Gagliardo-Nirenberg's inequality, the conservation of mass and $(\ref{change variable property 2})$, we obtain
\begin{align*}
\|u(t)\|_{L^q_x} =\|v(t)\|_{L^q_x}  &\lesssim \|\nabla v(t)\|_{L^2_x}^{d\left(\frac{1}{2}-\frac{1}{q}\right)} \|v(t)\|^{1-d\left(\frac{1}{2}-\frac{1}{q} \right)}_{L^2_x}  \\
&\lesssim t^{-\frac{d(2b+d\alpha)}{4}\left(\frac{1}{2}-\frac{1}{q}\right)} \|u_0\|_{L^2_x}^{1-d\left(\frac{1}{2}-\frac{1}{q} \right)} \lesssim t^{-\frac{d(2b+d\alpha)}{4}\left(\frac{1}{2}-\frac{1}{q}\right)}.
\end{align*}
This completes the proof.
\hfill $\Box$

\section{Scattering in the weighted $L^2$ space} 
\setcounter{equation}{0}
\label{S6}

In this section, we will give the proof of the scattering in the weighted space $\Sigma$ given in Theorem $\ref{theorem scattering weighted space}$. To do this, we use the decay given in Theorem $\ref{theorem decay property}$ to obtain global bounds on the solution. The scattering property follows easily from the standard argument. We also give some comments in the case $\alpha\in (0,\alpha_\star)$ in the end of this section.\newline
\indent Let us introduce the following so-called Strauss exponent
\begin{align}
\alpha_0 := \frac{2-d-2b + \sqrt{d^2+12d+4 + 4b(b-2-d)}}{2d}, \label{strauss exponent}
\end{align}
which is the positive root to the following quadratic equation
\[
d\alpha^2 + (d-2+2b) \alpha + 2b-4 =0.
\]
\begin{remark} \label{rem strauss exponent}
It is easy to check that for $0<b<\min \{2, d\}$,
\[
\alpha_0<\frac{4-2b}{d}.
\]
\end{remark}

Note that when $b=0$, $\alpha_0$ is the classical Strauss exponent introduced in \cite{Tsutsumi} (see also \cite{CW, Cazenave}). Let us start with the following lemmas providing some estimates on the nonlinearity.

\begin{lemma}\label{lem scattering weighted space d geq 4}
Let $d\geq 4$, $b\in(0,2)$ and $\alpha \in [\alpha_\star, \alpha^\star)$. Then there exist $(p_1, q_1), (p_2, q_2) \in S$ satisfying $2\alpha+2>p_1, p_2$ and $q_1, q_2 \in \left(2, \frac{2d}{d-2}\right)$ such that
\begin{align}
\||x|^{-b} |u|^\alpha u \|_{S'(L^2, I)} &\lesssim \left(\|u\|^\alpha_{L^{m_1}_t(I, L^{q_1}_x)} + \|u\|^\alpha_{L^{m_2}_t(I, L^{q_2}_x)}\right) \|u\|_{S(L^2, I)}, \label{estimate nonlinear 1 scattering weighted space} \\
\|\nabla(|x|^{-b} |u|^\alpha u)\|_{S'(L^2, I)} &\lesssim \left(\|u\|^\alpha_{L^{m_1}_t(I, L^{q_1}_x)} + \|u\|^\alpha_{L^{m_2}_t(I, L^{q_2}_x)}\right) \|\nabla u\|_{S(L^2, I)}, \label{estimate nonlinear 2 scattering weighted space}
\end{align}
where $m_1 = \frac{\alpha p_1}{p_1-2}$ and $m_2 = \frac{\alpha p_2}{p_2-2}$. 
\end{lemma}

\begin{proof}
Let us bound
\[
\||x|^{-b} |u|^\alpha u \|_{S'(L^2, I)} \leq \||x|^{-b} |u|^\alpha u \|_{S'(L^2(B), I)} + \||x|^{-b} |u|^\alpha u \|_{S'(L^2(B^c), I)}=:A_1 + A_2,
\]
and 
\[
\nabla(|x|^{-b} |u|^\alpha u)\|_{S'(L^2, I)} \leq \nabla(|x|^{-b} |u|^\alpha u)\|_{S'(L^2(B), I)} + \nabla(|x|^{-b} |u|^\alpha u)\|_{S'(L^2(B^c), I)}=:B_1 + B_2,
\]
where 
\begin{align*}
B_1 &\leq \||x|^{-b} \nabla(|u|^\alpha u) \|_{S'(L^2(B), I)} + \||x|^{-b-1} |u|^\alpha u \|_{S'(L^2(B), I)} =: B_{11}+B_{12}, \\
B_2 &\leq \||x|^{-b} \nabla(|u|^\alpha u) \|_{S'(L^2(B^c), I)} + \||x|^{-b-1} |u|^\alpha u \|_{S'(L^2(B^c), I)} 	=: B_{21}+B_{22}.
\end{align*}
\indent \underline{\bf On $B$.} By H\"older's inequality and Remark $\ref{rem singularity bound}$,
\begin{align*}
A_1 \leq \||x|^{-b} |u|^\alpha u \|_{L^{p_1'}_t(I, L^{q_1'}_x(B))} &\lesssim \||x|^{-b}\|_{L^{\gamma_1}_x(B)} \||u|^\alpha u\|_{L^{p_1'}_t(I, L^{\upsilon_1}_x)} \\
&\lesssim \|u\|^\alpha_{L^{m_1}_t(I, L^{q_1}_x)} \|u\|_{L^{p_1}_t(I, L^{q_1}_x)},
\end{align*}
provided that $(p_1, q_1)\in S$ and
\[
\frac{1}{q_1'}=\frac{1}{\gamma_1}+\frac{1}{\upsilon_1}, \quad \frac{d}{\gamma_1}>b, \quad \frac{1}{\upsilon_1}=\frac{\alpha+1}{q_1}, \quad \frac{1}{p_1'}=\frac{\alpha}{m_1}+\frac{1}{p_1}.
\]
These conditions imply
\[
\frac{d}{\gamma_1} = d-\frac{d(\alpha+2)}{q_1}>b, \quad \frac{\alpha}{m_1} = 1-\frac{2}{p_1}.
\]
Let us choose 
\begin{align}
q_1=\frac{d(\alpha+2)}{d-b}+\eps, \label{choice q_1}
\end{align}
for some $0<\eps \ll 1$ to be chosen later. Since we are considering $d\geq 4, b\in (0,2)$ and $\alpha \in [\alpha_\star, \alpha^\star)$, it is easy to check that $q_1 \in \left(2, \frac{2d}{d-2}\right)$ provided that $\eps>0$ is taken small enough. We thus get
\begin{align}
A_1 \lesssim \|u\|^\alpha_{L^{m_1}_t(I, L^{q_1}_x)} \|u\|_{S(L^2, I)}. \label{estimate nonlinear 1 scattering weighted space proof 1}
\end{align}
The term $B_{11}$ is treated similarly by using the fractional chain rule, and we have
\begin{align}
B_{11} \lesssim \|u\|^\alpha_{L^{m_1}_t(I, L^{q_1}_x)} \|\nabla u\|_{S(L^2, I)}. \label{estimate nonlinear 2 scattering weighted space proof 1}
\end{align}
We next bound
\begin{align*}
B_{12} \leq \||x|^{-b-1} |u|^\alpha u \|_{L^{p_1'}_t(I, L^{q_1'}_x(B))} &\lesssim \||x|^{-b-1}\|_{L^{\gamma_1}_x(B)} \||u|^\alpha u\|_{L^{p_1'}_t(I, L^{\upsilon_1}_x)} \\
&\lesssim \|u\|^\alpha_{L^{m_1}_t(I, L^{q_1}_x)} \|u\|_{L^{p_1}_t(I, L^{n_1}_x)} \\
&\lesssim \|u\|^\alpha_{L^{m_1}_t(I, L^{q_1}_x)} \|\nabla u\|_{L^{p_1}_t(I, L^{q_1}_x)},
\end{align*}
provided
\begin{align*}
\frac{1}{q_1'}=\frac{1}{\gamma_1}+\frac{1}{\upsilon_1}, \quad \frac{d}{\gamma_1}>b+1, \quad \frac{1}{\upsilon_1} = \frac{\alpha}{q_1} + \frac{1}{n_1}, \quad \frac{1}{p_1'}=\frac{\alpha}{m_1}+\frac{1}{p_1}, 
\end{align*}
and
\[
q_1 <d, \quad  \frac{1}{n_1} = \frac{1}{q_1}-\frac{1}{d}.
\]
Here the last condition allows us to use the homogeneous Sobolev embedding $\dot{W}^{1, q_1}(\R^d)\subset L^{n_1}(\R^d)$. Note that by taking $\eps>0$ small enough, the condition $q_1<d$ implies $\alpha <d-b-2$ which is true for $d\geq 4$ and $\alpha \in [\alpha_\star, \alpha^\star)$. We then have
\[
\frac{d}{\gamma_1} = d-\frac{d(\alpha+2)}{q_1} +1 >b+1, \quad \frac{\alpha}{m_1}=1-\frac{2}{p_1}. 
\]
Therefore, by choosing $q_1$ as in $(\ref{choice q_1})$, we obtain
\begin{align}
B_{12} \lesssim \|u\|^\alpha_{L^{m_1}_t(I, L^{q_1}_x)} \|\nabla u\|_{S(L^2, I)}. \label{estimate nonlinear 2 scattering weighted space proof 2}
\end{align}
\indent \underline{\bf On $B^c$.} By H\"older's inequality and Remark $\ref{rem singularity bound}$,
\begin{align*}
A_2 \leq \||x|^{-b}|u|^\alpha u\|_{L^{p_2'}_t(I, L^{q_2'}_x(B^c))} &\lesssim \||x|^{-b}\|_{L^{\gamma_2}_x(B^c)} \||u|^\alpha u\|_{L^{p_2'}_t(I, L^{\upsilon_2}_x)} \\
&\lesssim \|u\|^\alpha_{L^{m_2}_t(I, L^{q_2}_x)}\|u\|_{L^{p_2}_t(I, L^{q_2}_x)}, 
\end{align*}
provided that $(p_2, q_2) \in S$ and 
\[
\frac{1}{q_2'}=\frac{1}{\gamma_2}+\frac{1}{\upsilon_2}, \quad\frac{d}{\gamma_2}<b, \quad \frac{1}{\upsilon_2}=\frac{\alpha+1}{q_2}, \quad\frac{1}{p_2'}=\frac{\alpha}{m_2}+\frac{1}{p_2}.
\]
These conditions imply
\[
\frac{d}{\gamma_2}=d-\frac{d(\alpha+2)}{q_2}<b, \quad\frac{\alpha}{m_2}=1-\frac{2}{p_2}.
\]
Let us choose 
\begin{align}
q_2 = \frac{d(\alpha+2)}{d-b}-\eps, \label{choice q_2}
\end{align} 
for some $0<\eps \ll 1$ to be chosen later. By taking $\eps>0$ small enough, we see that $q_1 \in \left(2, \frac{2d}{d-2}\right)$. We thus obtain
\begin{align}
A_2 \lesssim \|u\|^\alpha_{L^{m_2}_t(I, L^{q_2}_x)}\|u\|_{S(L^2, I)}. \label{estimate nonlinear 1 scattering weighted space proof 2}
\end{align}
Similarly, by using the fractional chain rule, we have
\begin{align}
B_{21} \lesssim \|u\|^\alpha_{L^{m_2}_t(I, L^{q_2}_x)}\|\nabla u\|_{S(L^2, I)}. \label{estimate nonlinear 2 scattering weighted space proof 3}
\end{align}
We now estimate
\begin{align*}
B_{22} \leq \||x|^{-b-1} |u|^\alpha u \|_{L^{p_2'}_t(I, L^{q_2'}_x(B^c))} &\lesssim \||x|^{-b-1}\|_{L^{\gamma_2}_x(B^c)} \||u|^\alpha u\|_{L^{p_2'}_t(I, L^{\upsilon_2}_x)} \\
&\lesssim \|u\|^\alpha_{L^{m_2}_t(I, L^{q_2}_x)} \|u\|_{L^{p_2}_t(I, L^{n_2}_x)} \\
&\lesssim \|u\|^\alpha_{L^{m_2}_t(I, L^{q_2}_x)} \|\nabla u\|_{L^{p_2}_t(I, L^{q_2}_x)},
\end{align*}
provided that $(p_2, q_2)\in S$ and 
\begin{align*}
\frac{1}{q_2'}=\frac{1}{\gamma_2}+\frac{1}{\upsilon_2}, \quad \frac{d}{\gamma_2}<b+1, \quad \frac{1}{\upsilon_2} = \frac{\alpha}{q_2} + \frac{1}{n_2}, \quad \frac{1}{p_2'}=\frac{\alpha}{m_2}+\frac{1}{p_2}, \quad q_2<d, \quad \frac{1}{n_2} = \frac{1}{q_2}-\frac{1}{d}. 
\end{align*}
This is then equivalent to
\[
\frac{d}{\gamma_2}=d-\frac{d(\alpha+2)}{q_2}+1<b+1, \quad \frac{\alpha}{m_2}=1-\frac{2}{p_2}.
\]
Thus by choosing $q_2$ as in $(\ref{choice q_2})$, we obtain
\begin{align}
B_{22} \lesssim \|u\|^\alpha_{L^{m_2}_t(I, L^{q_1}_x)} \|\nabla u\|_{S(L^2, I)}. \label{estimate nonlinear 2 scattering weighted space proof 4}
\end{align}
Collecting $(\ref{estimate nonlinear 1 scattering weighted space proof 1}), (\ref{estimate nonlinear 1 scattering weighted space proof 2})$ and $(\ref{estimate nonlinear 2 scattering weighted space proof 1}), (\ref{estimate nonlinear 2 scattering weighted space proof 2}), (\ref{estimate nonlinear 2 scattering weighted space proof 3}), (\ref{estimate nonlinear 2 scattering weighted space proof 4})$, we obtain $(\ref{estimate nonlinear 1 scattering weighted space})$ and $(\ref{estimate nonlinear 2 scattering weighted space})$. \newline
\indent It remains to check that $p_1, p_2 <2\alpha+2$ where $(p_1, q_1), (p_2, q_2) \in S$ with $q_1, q_2$ as in $(\ref{choice q_1})$ and $(\ref{choice q_2})$ respectively. Note that $q_1, q_2$ are almost similar up to $\pm \eps$. Let us denote $(p,q) \in S$ with
\[
q=\frac{d(\alpha+2)}{d-b} + a \eps, \quad a \in \{\pm 1\}.
\]
We will check that for $\eps>0$ small enough, $p<2\alpha+2$ or $\frac{d}{2}-\frac{d}{q}=\frac{2}{p}>\frac{1}{\alpha+1}$. By a direct computation, it is equivalent to
\[
d[d\alpha^2 + (d-2+2b)\alpha +2b-4] +a\eps(d-b)[d(\alpha+1)-2]>0.
\]
Since $\alpha\geq \frac{4-2b}{d}>\alpha_0$ (see $(\ref{strauss exponent})$), we see that $d\alpha^2 + (d-2+2b)\alpha +2b-4>0$. Therefore, the above inequality holds true by taking $\eps>0$ sufficiently small.
\end{proof}

\begin{lemma} \label{lem scattering weighted space d=3}
Let $d=3$. Let 
\[
b\in \Big(0,\frac{5}{4}\Big), \quad \alpha \in \Big[\frac{4-2b}{3}, 3-2b \Big).
\]
Then there exist $(p_1, q_1), (p_2, q_2) \in S$ satisfying $2\alpha+2 >p_1, p_2$ and $q_1, q_2 \in (3, 6)$ such that
\begin{align}
\||x|^{-b} |u|^\alpha u\|_{S'(L^2, I)} &\lesssim \left(\|u\|^\alpha_{L^{m_1}_t(I, L^{q_1}_x)} + \|u\|^\alpha_{L^{m_2}_t(I, L^{q_2}_x)}\right) \|u\|_{S(L^2, I)}, \label{estimate 1 nonlinear scattering weighted space d=3} \\
\|\nabla(|x|^{-b} |u|^\alpha u)\|_{S'(L^2, I)} &\lesssim \left(\|u\|^\alpha_{L^{m_1}_t(I, L^{q_1}_x)} + \|u\|^\alpha_{L^{m_2}_t(I, L^{q_2}_x)}\right) \|\scal{\nabla} u\|_{S(L^2, I)}, \label{estimate 2 nonlinear scattering weighted space d=3} 
\end{align}
where $m_1 = \frac{\alpha p_1}{p_1-2}$ and $m_2 = \frac{\alpha p_2}{p_2-2}$. 
\end{lemma}
 
\begin{proof}
We firstly note that by using the same lines as in the proof of Lemma $\ref{lem scattering weighted space d geq 4}$, the following estimates
\begin{align}
\||x|^{-b} |u|^\alpha u \|_{S'(L^2, I)} &\lesssim \left(\|u\|^\alpha_{L^{m_1}_t(I, L^{q_1}_x)} + \|u\|^\alpha_{L^{m_2}_t(I, L^{q_2}_x)}\right) \|u\|_{S(L^2, I)},  \nonumber \\
\||x|^{-b} \nabla(|u|^\alpha u)\|_{S'(L^2, I)} &\lesssim \left(\|u\|^\alpha_{L^{m_1}_t(I, L^{q_1}_x)} + \|u\|^\alpha_{L^{m_2}_t(I, L^{q_2}_x)}\right) \|\nabla u\|_{S(L^2, I)} \label{estimate weighted solution d=3}
\end{align}
still hold true for $d=3, b\in (0,2)$ and $\alpha \in [\alpha_\star, \alpha^\star)$. It remains to estimate $\||x|^{-b-1} |u|^\alpha u\|_{S'(L^2, I)}$. To do this, we divide this term into two parts on $B$ and on $B^c$ which are denoted by $B_{12}$ and $B_{22}$ respectively. By H\"older's inequality and Remark $\ref{rem singularity bound}$,
\begin{align*}
B_{12}\leq \||x|^{-b-1}|u|^\alpha u\|_{L^{p_1'}_t(I, L^{q_1'}_x(B))} &\lesssim \||x|^{-b-1}\|_{L^{\gamma_1}_x(B)} \||u|^\alpha u\|_{L^{p_1'}_t(I, L^{\upsilon_1}_x)} \\
&\lesssim \|u\|^\alpha_{L^{m_1}_t(I, L^{q_1}_x)} \|u\|_{L^{p_1}_t(I, L^{n_1}_x)} \\
&\lesssim \|u\|^\alpha_{L^{m_1}_t(I, L^{q_1}_x)} \|\scal{\nabla}u\|_{L^{p_1}_t(I, L^{q_1}_x)},
\end{align*}
provided that $(p_1, q_1)\in S$ and 
\[
\frac{1}{q_1'}=\frac{1}{\gamma_1}+\frac{1}{\upsilon_1}, \quad \frac{d}{\gamma_1}>b+1, \quad\frac{1}{\upsilon_1}=\frac{\alpha}{q_1}+\frac{1}{n_1}, \quad \frac{1}{p_1'} = \frac{\alpha}{m_1}+\frac{1}{p_1},
\]
and
\[
q_1 \geq 3, \quad n_1 \in (q_1, \infty) \quad \text{or} \quad \frac{1}{n_1}=\frac{\tau}{q_1}, \quad \tau \in (0,1).
\]
This implies that 
\[
\frac{d}{\gamma_1}=3-\frac{3(\alpha+1+\tau)}{q_1}>b+1, \quad \frac{\alpha}{m_1}=1-\frac{2}{p_1}.
\]
Le us choose 
\begin{align}
q_1=\frac{3(\alpha+1+\tau)}{2-b} +\eps,\label{choice q_1 scattering weighted space}
\end{align}
for some $0<\eps\ll 1$ to be chosen later. Since $\alpha\geq \frac{4-2b}{3}$, it is obvious that $q_1>3$. Moreover, the condition $q_1 <6$ implies $\alpha+\tau<3-2b$. Thus by choosing $\tau$ closed to 0, we need $\alpha<3-2b$. Combining with $\alpha \geq \frac{4-2b}{3}$, we get
\begin{align}
\frac{4-2b}{3}\leq \alpha <3-2b, \quad 0<b<\frac{5}{4}. \label{requirement b alpha}
\end{align}
Thus, for $b$ and $\alpha$ satisfying $(\ref{requirement b alpha})$, we have
\[
B_{12} \lesssim \|u\|^\alpha_{L^{m_1}_t(I, L^{q_1}_x)} \|\scal{\nabla} u\|_{S(L^2, I)}.
\]
Similarly, we estimate
\begin{align*}
B_{22} \leq \||x|^{-b-1} |u|^\alpha u\|_{L^{p_2'}_t(I, L^{q_2'}_x(B^c))} &\lesssim \||x|^{-b-1}\|_{L^{\gamma_2}_x(B^c)} \||u|^\alpha u\|_{L^{p_2'}_t(I, L^{\upsilon_2}_x)} \\
&\lesssim \|u\|^\alpha_{L^{m_2}_t(I, L^{q_2}_x)} \|u\|_{L^{p_2}_t(I, L^{n_2}_x)} \\
&\lesssim \|u\|^\alpha_{L^{m_2}_t(I, L^{q_2}_x)} \|\scal{\nabla}u\|_{L^{p_2}_t(I, L^{q_2}_x)}
\end{align*}
provided that $(p_2, q_2)\in S$ and 
\[
\frac{1}{q_2'} = \frac{1}{\gamma_2} + \frac{1}{\upsilon_2}, \quad \frac{d}{\gamma_2}<b+1, \quad \frac{1}{\upsilon_2} = \frac{\alpha}{p_2} +\frac{1}{n_2}, \quad \frac{1}{p_2'} =\frac{\alpha}{m_2} +\frac{1}{p_2},
\]
and
\[
q_2\geq 3, \quad n_2 \in (q_2, \infty) \quad \text{or} \quad \frac{1}{n_2} =\frac{\tau}{q_2}, \quad \tau \in (0,1).
\]
We thus get
\[
\frac{d}{\gamma_2} = 3-\frac{3(\alpha+1+\tau)}{q_2} <b+1, \quad \frac{\alpha}{m_2} =1-\frac{2}{p_2}.
\]
Let us choose
\begin{align}
q_2=\frac{3(\alpha+1+\tau)}{2-b} -\eps, \label{choice q_2 scattering weighted space}
\end{align}
for some $0<\eps \ll 1$ to be chosen later. It is easy to see that $q_2 \in (3, 6)$ for $0<b<\frac{5}{4}, \frac{4-2b}{3}\leq \alpha<3-2b$ and $\eps>0$ small enough. We thus obtain
\[
B_{22} \lesssim \|u\|^\alpha_{L^{m_2}_t(I, L^{q_2}_x)} \|\scal{\nabla} u\|_{S(L^2, I)}.
\]
It remains to check $p_1, p_2<2\alpha+2$ for $(p_1,q_1), (p_2, q_2)\in S$ with $q_1$ and $q_2$ given in $(\ref{choice q_1 scattering weighted space})$ and $(\ref{choice q_2 scattering weighted space})$ respectively. Let us denote $(p,q)\in S$ with
\[
q=\frac{3(\alpha+1+\tau)}{2-b}+a\eps, \quad a \in \{\pm 1\}.
\]
The condition $p<2\alpha+2$ is equivalent to
\[
\frac{3}{2}-\frac{3}{q}=\frac{2}{p}>\frac{1}{\alpha+1}.
\]
A direct computation shows
\[
3[3\alpha^2 +2b\alpha +2b-3 + \tau(3\alpha +1)] + a\eps (2-b)(3\alpha+1) >0.
\]
By taking $\eps>0$ small enough and $\tau$ closed to 0, it is enough to have
\[
3\alpha^2 +2b\alpha +2b-3>0.
\]
It implies that $\alpha>\frac{3-2b}{3}$. Comparing with $(\ref{requirement b alpha})$, we see that
\[
\frac{4-2b}{3}\leq \alpha <3-2b, \quad b\in \Big(0, \frac{5}{4}\Big).
\]
The proof is complete.
\end{proof}

We also have the following result in the same spirit with Lemma $\ref{lem scattering weighted space d=3}$ in the two dimensional case. 
\begin{lemma} \label{lem scattering weighted space d=2}
Let $d=2$. Let $b\in (0,1)$ and $\alpha \in [\alpha_\star, \alpha^\star)$. Then there exist $(p_1, q_1), (p_2, q_2) \in S$ satisfying $2\alpha+2 >p_1, p_2$ and $q_1, q_2 \in (2, \infty)$ such that
\begin{align}
\||x|^{-b} |u|^\alpha u\|_{S'(L^2, I)} &\lesssim \left(\|u\|^\alpha_{L^{m_1}_t(I, L^{q_1}_x)} + \|u\|^\alpha_{L^{m_2}_t(I, L^{q_2}_x)}\right) \|u\|_{S(L^2, I)}, \label{estimate 1 nonlinear scattering weighted space d=2} \\
\|\nabla(|x|^{-b} |u|^\alpha u)\|_{S'(L^2, I)} &\lesssim \left(\|u\|^\alpha_{L^{m_1}_t(I, L^{q_1}_x)} + \|u\|^\alpha_{L^{m_2}_t(I, L^{q_2}_x)}\right) \|\scal{\nabla} u\|_{S(L^2, I)}, \label{estimate 2 nonlinear scattering weighted space d=2} 
\end{align}
where $m_1 = \frac{\alpha p_1}{p_1-2}$ and $m_2 = \frac{\alpha p_2}{p_2-2}$. 
\end{lemma} 

\begin{proof}
We firstly note that the following estimates
\begin{align}
\||x|^{-b} |u|^\alpha u \|_{S'(L^2, I)} &\lesssim \left(\|u\|^\alpha_{L^{m_1}_t(I, L^{q_1}_x)} + \|u\|^\alpha_{L^{m_2}_t(I, L^{q_2}_x)}\right) \|u\|_{S(L^2, I)},  \nonumber \\
\||x|^{-b} \nabla(|u|^\alpha u)\|_{S'(L^2, I)} &\lesssim \left(\|u\|^\alpha_{L^{m_1}_t(I, L^{q_1}_x)} + \|u\|^\alpha_{L^{m_2}_t(I, L^{q_2}_x)}\right) \|\nabla u\|_{S(L^2, I)} \label{estimate weighted solution d=2}
\end{align}
still hold true for $d=2, b\in (0,2)$ and $\alpha \in [\alpha_\star, \alpha^\star)$ by using the same lines as in the proof of Lemma $\ref{lem scattering weighted space d geq 4}$. It remains to estimate the term $\||x|^{-b-1} |u|^\alpha u\|_{S'(L^2, I)}$. Using the notations given in the proof of Lemma $\ref{lem scattering weighted space d geq 4}$, we bound this term by $B_{12}+B_{22}$. By H\"older's inequality and Remark $\ref{rem singularity bound}$,
\begin{align*}
B_{12} \leq \||x|^{-b-1} |u|^\alpha u\|_{L^{p_1'}_t(I, L^{q_1'}_x(B))} &\lesssim \||x|^{-b-1}\|_{L^{\gamma_1}_x(B)} \||u|^\alpha u\|_{L^{p_1'}_t (I, L^{\upsilon_1}_x)} \\
&\lesssim \|u\|^\alpha_{L^{m_1}_t(I, L^{q_1}_x)} \|u\|_{L^{p_1}_t (I, L^{n_1}_x)}\\
&\lesssim \|u\|^\alpha_{L^{m_1}_t(I, L^{q_1}_x)} \|\scal{\nabla}u\|_{L^{p_1}_t (I, L^{q_1}_x)},
\end{align*}
provided that $(p_1, q_1)\in S$ and
\[
\frac{1}{q_1'}=\frac{1}{\gamma_1}+\frac{1}{\upsilon_1}, \quad \frac{2}{\gamma_1}>b+1, \quad \frac{1}{\upsilon_1}=\frac{\alpha}{q_1}+\frac{1}{n_1}, 
\]
and
\[
q_1 \geq 2, \quad n_1 \in (q_1, \infty) \quad \text{or} \quad \frac{1}{n_1}=\frac{\tau}{q_1}, \quad \tau \in (0,1).
\]
These conditions imply that
\[
\frac{2}{\gamma_1}=2-\frac{2(\alpha+1+\tau)}{q_1}>b+1, \quad \frac{\alpha}{m_1}=1-\frac{2}{p_1}.
\]
Let us choose 
\begin{align}
q_1=\frac{2(\alpha+1+\tau)}{1-b} + \eps, \label{choice q_1 scattering weighted space d=2}
\end{align}
for some $0<\eps \ll 1$ to be chosen later. It is obvious that $q_1 \in (2,\infty)$ for any $\tau \in (0,1)$. We thus obtain
\[
B_{12}\lesssim  \|u\|^\alpha_{L^{m_1}_t(I, L^{q_1}_x)} \|\scal{\nabla}u\|_{S(L^2, I)}.
\]
Similarly, 
\begin{align*}
B_{22} \leq \||x|^{-b-1} |u|^\alpha u\|_{L^{p_2'}_t(I, L^{q_2'}_x(B^c))} &\lesssim \||x|^{-b-1}\|_{L^{\gamma_2}_x(B^c)} \||u|^\alpha u\|_{L^{p_2'}_t (I, L^{\upsilon_2}_x)} \\
&\lesssim \|u\|^\alpha_{L^{m_2}_t(I, L^{q_2}_x)} \|u\|_{L^{p_2}_t (I, L^{n_2}_x)}\\
&\lesssim \|u\|^\alpha_{L^{m_2}_t(I, L^{q_1}_x)} \|\scal{\nabla}u\|_{L^{p_2}_t (I, L^{q_2}_x)},
\end{align*}
provided that
\[
\frac{1}{q_2'}=\frac{1}{\gamma_2}+\frac{1}{\upsilon_2}, \quad \frac{2}{\gamma_1}<b+1, \quad \frac{1}{\upsilon_2}=\frac{\alpha}{q_2}+\frac{1}{n_2}, 
\]
and
\[
q_2 \geq 2, \quad n_2 \in (q_2, \infty) \quad \text{or} \quad \frac{1}{n_2}=\frac{\tau}{q_2}, \quad \tau \in (0,1).
\]
We learn from these conditions that
\[
\frac{d}{\gamma_2}=2-\frac{2(\alpha+1+\tau)}{q}<b+1, \quad \frac{\alpha}{m_2}=1-\frac{2}{p_2}.
\]
Let us choose
\begin{align}
q_2=\frac{2(\alpha+1+\tau)}{1-b}-\eps, \label{choice q_2 scattering weighted space d=2}
\end{align}
for some $0<\eps\ll 1$ small enough. By choosing $\eps>0$ sufficiently small, we have $q_2 \in (2, \infty)$ for any $\tau \in (0,1)$. We get
\[
B_{22} \lesssim \|u\|^\alpha_{L^{m_2}_t(I, L^{q_1}_x)} \|\scal{\nabla}u\|_{S(L^2, I)}.
\]
To complete the proof, we need to check $p_1, p_2 <2\alpha+2$ with $(p_1, q_1), (p_2, q_2)\in S$ where $q_1$ and $q_2$ given in $(\ref{choice q_1 scattering weighted space d=2})$ and $(\ref{choice q_2 scattering weighted space d=2})$ respectively. Let us denote $(p,q)\in S$ with 
\[
q=\frac{2(\alpha+1+\tau)}{1-b} +a \eps, \quad a\in \{\pm 1 \}.
\] 
The condition $p<2\alpha+2$ is equivalent to 
\[
1-\frac{2}{q}=\frac{2}{p}>\frac{1}{\alpha+1}.
\]
It is in turn equivalent to
\[
2[\alpha^2 +b\alpha + b-1 +\tau \alpha] + a\eps\alpha(1-b)>0.
\]
By taking $\eps>0$ small enough and $\tau$ closed to 0, this condition holds true provided $\alpha^2 +b\alpha +b-1>0$. This implies $\alpha>1-b$ which is satisfied since $\alpha \in [\alpha_\star, \alpha^\star)$. The proof is complete.
\end{proof}

As a direct consequence of Lemmas $\ref{lem scattering weighted space d geq 4}, \ref{lem scattering weighted space d=3}, \ref{lem scattering weighted space d=2}$, we have the following global $H^1$-Strichartz bound of solutions to the defocusing (INLS).
\begin{proposition} \label{prop global bound}
Let 
\[
d\geq 4, \quad b\in (0,2), \quad \alpha \in [\alpha_\star, \alpha^\star),
\]
or
\[
d=3, \quad b\in \Big(0, \frac{5}{4}\Big), \quad \alpha \in [\alpha_\star, 3-2b),
\]
or
\[
d=2, \quad b\in (0,1), \quad \alpha \in [\alpha_\star, \alpha^\star).
\]
Let $u_0 \in \Sigma$ and $u$ be the global solution to the defocusing \emph{(INLS)}. Then $u\in L^p(\R, W^{1,q}(\R^d))$ for any Schr\"odinger admissible pair $(p,q)$. 
\end{proposition}

\begin{proof}
We have from the Duhamel formula,
\begin{align}
u(t)=e^{it\Delta}u_0 -i\int_0^t e^{i(t-s)\Delta} |x|^{-b}|u(s)|^\alpha u(s)ds.\label{duhamel formula}
\end{align}
Let $0\leq T \leq t$. We apply Lemmas $\ref{lem scattering weighted space d geq 4}, \ref{lem scattering weighted space d=3}, \ref{lem scattering weighted space d=2}$ with $I=(T, t)$ and use the conservation of mass to get \footnote{See $(\ref{define h1 strichartz norm})$ for the definition of $\|u\|_{S(I)}$.}
\begin{align*}
\|u\|_{S(I)} &\leq  C\|u(T)\|_{H^1_x} + C \||x|^{-b}|u|^\alpha u\|_{S'(L^2, I)} + C\| \nabla(|x|^{-b}|u|^\alpha u)\|_{S'(L^2, I)} \\
& \leq C\|u_0\|_{H^1_x} + C\left(\|u\|^\alpha_{L^{m_1}_t(I, L^{q_1}_x)} + \|u\|^\alpha_{L^{m_2}_t(I, L^{q_2}_x)}\right) \|u\|_{S(I)},
\end{align*}
where $(p_i, q_i)\in S$ satisfy $p_i<2\alpha+2, q_i \in (2, 2^\star)$ and $m_i =\frac{\alpha p_i}{p_i-2}$ for $i=1,2$. Here $2^\star= \frac{2d}{d-2}$ if $d\geq 3$ and $2^\star = \infty$ if $d=2$. Note that the constant $C$ is independent of $I$ and may change from line to line. The norm $\|u\|^\alpha_{L^{m_i}_t(I, L^{q_i}_x)}$ can be written as
\[
\Big(\int_T^t \|u(s)\|^{m_i}_{L^{q_i}_x} ds \Big)^{\frac{\alpha}{m_i}} = \Big(\int_T^t \|u(s)\|^{\frac{\alpha p_i}{p_i-2}}_{L^{q_i}_x} ds \Big)^{\frac{p_i-2}{p_i}}. 
\]
By the decay of global solutions given in Theorem $\ref{theorem decay property}$, we see that
\[
\|u(s)\|_{L^{q_i}_x} \lesssim s^{-d\left(\frac{1}{2}-\frac{1}{q_i}\right)} =s^{-\frac{2}{p_i}} \quad \text{so} \quad \|u(s)\|^{\frac{\alpha p_i}{p_i-2}}_{L^{q_i}_x} \lesssim s^{-\frac{2\alpha}{p_i-2}}. 
\]
Since $p_i <2\alpha +2$ or $\frac{2\alpha}{p_i-2}>1$, by choosing $T>0$ large enough,
\[
C\Big(\int_T^t \|u(s)\|^{\frac{\alpha p_i}{p_i-2}}_{L^{q_i}_x} ds \Big)^{\frac{p_i-2}{p_i}} \leq \frac{1}{4}.
\]
We thus obtain
\[
\|u\|_{S(I)} \leq C +\frac{1}{2} \|u\|_{S(I)} \quad \text{or}\quad \|u\|_{S(I)} \leq 2C.
\]
Letting $t\rightarrow +\infty$, we obtain $\|u\|_{S((T, +\infty))} \leq 2C$. Similarly, one can prove that $\|u\|_{S((-\infty,-T))} \leq 2C$. Combining these two bounds and the local theory, we prove $u \in L^p(\R, W^{1,q}(\R^d))$ for any Schr\"odinger admissible pair $(p,q)$. 
\end{proof}

\begin{remark} \label{rem global strichartz bound}
Using this global $H^1$-Strichartz bound, one can obtain easily (see the proof of Theorem $\ref{theorem scattering weighted space}$ given below) the scattering in $H^1$ provided that $u_0 \in \Sigma$. But one does not know whether the scattering states $u_0^\pm$ belong to $\Sigma$. 
\end{remark}

In order to show the scattering states $u_0^\pm \in \Sigma$, we need to show the global $L^2$-Strichartz bound for the weighted solutions $(x+2it\nabla)u(t)$. To do this, we need the following estimates on the nonlinearity.
\begin{lemma} \label{lem nonlinear estimate weighted solution}
1. Let
\[
d=3, \quad b\in (0,1), \quad \alpha \in \Big(\frac{5-2b}{3}, 3-2b\Big).
\]
Then there exist $(p_1, q_1), (p_2, q_2)\in S$ satisfying $\alpha+1>p_1, p_2$ and $q_1, q_2 \in  (3,6)$ such that
\[
\||x|^{-b-1}|u|^\alpha u\|_{S'(L^2, I)} \lesssim \left(\|u\|^\alpha_{L^{m_1}_t(I, L^{q_1}_x)} +\|u\|^\alpha_{L^{m_2}_t(I, L^{q_2}_x)}\right) \|\scal{\nabla}u\|_{S(L^2, I)}.
\]
2. Let
\[
d=2, \quad b\in (0,1), \quad \alpha \in [\alpha_\star, \alpha^\star).
\]
Then there exist $(p_1, q_1), (p_2, q_2)\in S$ satisfying $\alpha+1>p_1, p_2$ and $q_1, q_2 \in  (2,\infty)$ such that
\[
\||x|^{-b-1}|u|^\alpha u\|_{S'(L^2, I)} \lesssim \left(\|u\|^\alpha_{L^{m_1}_t(I, L^{q_1}_x)} +\|u\|^\alpha_{L^{m_2}_t(I, L^{q_2}_x)}\right) \|\scal{\nabla}u\|_{S(L^2, I)}.
\]
\end{lemma}

\begin{proof}
In the case $d=3$, we use the same argument as in the proof of Lemma $\ref{lem scattering weighted space d=3}$ with 
\[
q_1 = \frac{3(\alpha+1+\tau)}{2-b} +\eps, \quad q_2 = \frac{3(\alpha+1+\tau)}{2-b} -\eps
\]
for some $\eps>0$ small enough and $\tau$ closed to 0. It remains to check $\alpha+1>p_1, p_2$ where $(p_1, q_1), (p_2, q_2) \in S$. Let us denote $(p,q) \in S$ with 
\[
q=\frac{3(\alpha+1+\tau)}{2-b} + a\eps, \quad a \in \{\pm 1\}.
\] 
The condition $p<\alpha+1$ is equivalent to
\[
\frac{3}{2}-\frac{3}{q} =\frac{2}{p} >\frac{2}{\alpha+1}.
\]
An easy computation shows
\[
3[3\alpha^2 + 2(b-1)\alpha + 2b-5 + \tau(3\alpha-1)] +a\eps (2-b) (3\alpha-1)>0.
\]
By taking $\eps$ and $\tau$ small enough, it is enough to show
\[
3\alpha^2 + 2(b-1)\alpha+ 2b-5>0. 
\]
This implies that $\alpha >\frac{5-2b}{3}$. Comparing with the assumptions $b\in \left(0, \frac{5}{4}\right)$ and $\alpha \in \left[\frac{4-2b}{3}, 3-2b\right)$ of Lemma $\ref{lem scattering weighted space d=3}$, we have
\[
b\in (0,1), \quad \alpha \in \Big(\frac{5-2b}{3}, 3-2b \Big).
\]
The case $d=2$ is treated similarly. As in the proof of Lemma $\ref{lem scattering weighted space d=2}$, we choose 
\[
q_1 = \frac{2(\alpha+1+\tau)}{1-b} +\eps, \quad q_2 =\frac{2(\alpha+1+\tau)}{1-b} -\eps,
\]
for some $\eps, \tau>0$ small enough. As above, let us denote $(p,q) \in S$ with 
\[
q=\frac{2(\alpha+1+\tau)}{1-b} + a\eps, \quad a \in \{\pm 1\}.
\] 
The condition $p<\alpha+1$ is equivalent to
\[
1-\frac{2}{q} =\frac{2}{p} >\frac{2}{\alpha+1}.
\]
An easy computation shows
\[
2[\alpha^2 + (b-1)\alpha + b-2 + \tau(\alpha-1)] +a\eps (1-b) (\alpha-1)>0.
\]
By taking $\eps$ and $\tau$ small enough, it is enough to show
\[
\alpha^2 + (b-1)\alpha+ b-2>0. 
\]
This implies that $\alpha >1-b$ which is always satisfied for $\alpha \in [\alpha_\star, \alpha^\star)$. The proof is complete.
\end{proof}

\begin{proposition} \label{prop global bound weighted space}
Let $d, b$ and $\alpha$ be as in Theorem $\ref{theorem scattering weighted space}$. Let $u_0 \in \Sigma$ and $u$ be the global solution to the defocusing \emph{(INLS)}. Set
\[
w(t) := (x+2it\nabla) u(t).
\] 
Then $w\in L^p(\R, L^q(\R^d))$ for every Schr\"odinger admissible pair $(p,q)$. 
\end{proposition}

\begin{proof}
We firstly notice that $x+2it\nabla$ commutes with $i\partial_t +\Delta$. By Duhamel's formula,
\begin{align}
w(t)=e^{it\Delta} x u_0 -i\int_0^t e^{i(t-s)\Delta} (x+2is\nabla)(|x|^{-b} |u(s)|^\alpha u(s)) ds. \label{duhamel formula weighted}
\end{align}
Let $v$ be as in $(\ref{define v})$. By $(\ref{change variable property 1})$, we have
\[
|(x+2it\nabla)(|x|^{-b}|u|^\alpha u)| = 2|t||\nabla(|x|^{-b}|v|^\alpha v)|, \quad |v|=|u|, \quad 2|t||\nabla v| = |w|.
\] 
\indent \underline{\bf Case 1: $d\geq 4$.} Strichartz estimates and Lemma $\ref{lem scattering weighted space d geq 4}$ show that for any $t>0$ and $I=(0,t)$,
\begin{align*}
\|w\|_{S(L^2, I)} &\lesssim \|x u_0\|_{L^2_x} + \|(x+2is\nabla)(|x|^{-b}|u|^\alpha u)\|_{S'(L^2,I)} \\
&\lesssim \|x u_0\|_{L^2_x}+  \|2|s| \nabla(|x|^{-b}|v|^\alpha v)\|_{S'(L^2, I)}. 
\end{align*}
Let $0\leq T \leq t$. We bound
\[
\|2|s| \nabla(|x|^{-b}|v|^\alpha v)\|_{S'(L^2, I)} \leq \|2|s| \nabla(|x|^{-b}|v|^\alpha v)\|_{S'(L^2, (0,T))} + \|2|s| \nabla(|x|^{-b}|v|^\alpha v)\|_{S'(L^2, (T,t))} = A+B.
\]
The term $A$ is treated as follows. By Lemma $\ref{lem scattering weighted space d geq 4}$ and keeping in mind that $|v|=|u|, 2|s||\nabla v| = |w|$, we bound
\begin{align*}
A &\lesssim \Big(\|u\|^\alpha_{L^{m_1}_t((0,T), L^{q_1}_x)} + \|u\|^\alpha_{L^{m_2}_t((0,T), L^{q_2}_x)}\Big) \|2|s| \nabla v\|_{S(L^2, I)} \\
&\lesssim \Big(\|u\|^\alpha_{L^{m_1}_t((0,T), L^{q_1}_x)} + \|u\|^\alpha_{L^{m_2}_t((0,T), L^{q_2}_x)}\Big) \|w\|_{S(L^2, I)},
\end{align*}
for some $(p_i, q_i)\in S$ satisfy $p_i<2\alpha+2, q_i \in (2, 2^\star)$ and $m_i =\frac{\alpha p_i}{p_i-2}$ for $i=1,2$. We next estimate
\[
\|u\|^\alpha_{L^{m_i}_t((0,T), L^{q_i}_x)} \lesssim T^{\frac{\alpha}{m_i}} \|u\|^\alpha_{L^\infty_t((0,T), H^1_x)} <\infty, \quad i =1, 2.
\]
Here the time $T>0$ is large but fixed and $u\in L^\infty_t((0,T), H^1_x)$ by the local theory. We also have $\|w\|_{S(L^2,(0,T))}<\infty$ which is proved in the Appendix. This shows the boundedness of $A$. For the term $B$, we bound
\begin{align*}
B &\lesssim \Big(\|u\|^\alpha_{L^{m_1}_t((T,t), L^{q_1}_x)} + \|u\|^\alpha_{L^{m_2}_t((T,t), L^{q_2}_x)}\Big) \|2|s| \nabla v\|_{S(L^2, (T, t))} \\
&\lesssim \Big(\|u\|^\alpha_{L^{m_1}_t((T,t), L^{q_1}_x)} + \|u\|^\alpha_{L^{m_2}_t((T,t), L^{q_2}_x)}\Big) \|w\|_{S(L^2, I)},
\end{align*}
for some $(p_i, q_i)\in S$ satisfy $p_i<2\alpha+2, q_i \in (2, 2^\star)$ and $m_i =\frac{\alpha p_i}{p_i-2}$ for $i=1,2$. By the same argument as in the proof of Proposition $\ref{prop global bound}$, we see that $\|u\|^\alpha_{L^{m_i}_t(T, t)}$ is small for $T>0$ large enough. Therefore,
\[
\|w\|_{S(L^2, I)} \leq C + \frac{1}{2}\|w\|_{S(L^2, I)} \quad \text{or} \quad \|w\|_{S(L^2, I)} \leq 2C.
\]
Letting $t\rightarrow +\infty$, we prove that $\|w\|_{S(L^2, (0,+\infty))} \leq 2C$. Similarly, one proves as well that $\|w\|_{S(L^2, (-\infty,0))} \leq 2C$. This shows $w\in L^p(\R, L^q(\R^d))$ for any Schr\"odinger admissible pair $(p,q)$. \newline
\indent \underline{\bf Case 2: $d=2,3$.} We bound
\begin{align*}
\|w\|_{S(L^2,I)}&\lesssim \|xu_0\|_{L^2_x} + \|(x+2is\nabla)(|x|^{-b} |u|^\alpha u)\|_{S'(L^2, I)}  \\
&\lesssim \|xu_0\|_{L^2_x} + \|2|s|\nabla(|x|^{-b} |v|^\alpha v)\|_{S'(L^2, I)}  \\
&\lesssim \|xu_0\|_{L^2_x} + \|2|s||x|^{-b} \nabla(|v|^\alpha v)\|_{S'(L^2, I)} + \|2|s||x|^{-b-1} |v|^\alpha v\|_{S'(L^2, I)} \\
&\lesssim \|x u_0\|_{L^2_x} + A + B.
\end{align*}
The term $A$ is treated similarly as in Case 1 using $(\ref{estimate weighted solution d=3}), (\ref{estimate weighted solution d=2})$. 
It remains to bound the term $B$. By Lemma $\ref{lem nonlinear estimate weighted solution}$,
\begin{align*}
B 
\lesssim \left(\||s|^{\frac{1}{\alpha}} u\|^\alpha_{L^{m_1}_t(I, L^{q_1}_x)} + \||s|^{\frac{1}{\alpha}} u\|^\alpha_{L^{m_2}_t(I, L^{q_2}_x)} \right) \|u\|_{S(L^2,I)},
\end{align*}
for some $(p_i, q_i)\in S$ satisfy $p_i<\alpha+1, q_i \in (2, 2^\star)$ and $m_i =\frac{\alpha p_i}{p_i-2}$ for $i=1,2$. We learn from Proposition $\ref{prop global bound}$ that $\|u\|_{S(L^2, I)}<\infty$. Let us bound $\||s|^{\frac{1}{\alpha}} u\|^\alpha_{L^{m_i}_t(I, L^{q_i}_x)}$ for $i=1,2$. To do so, we split $I$ into $(0,T)$ and $(T,t)$. By Sobolev embedding 
\[
\||s|^{\frac{1}{\alpha}} u\|^\alpha_{L^{m_i}_t((0,T), L^{q_i}_x)} \lesssim T^{1+\frac{\alpha}{m_i}}\|u\|^\alpha_{L^\infty_t((0,T), H^1_x)} <\infty.
\] 
We next write
\[
\||s|^{\frac{1}{\alpha}} u\|^\alpha_{L^{m_i}_t((T,t), L^{q_i}_x)} = \Big(\int_T^t |s|^{\frac{m_i}{\alpha}}\|u(s)\|^{m_i}_{L^{q_i}_x} ds \Big)^{\frac{\alpha}{m_i}}.
\]
By the decay of global solutions given in Theorem $\ref{theorem decay property}$, we see that
\[
|s|^{\frac{m_i}{\alpha}}\|u(s)\|^{m_i}_{L^{q_i}_x} \lesssim |s|^{\frac{m_i}{\alpha}-m_i\left(\frac{d}{2}-\frac{d}{q_i} \right)} = |s|^{-m_i\left(\frac{2}{p_i}-\frac{1}{\alpha}\right)} = |s|^{-\frac{2\alpha-p_i}{p_i-2}}.
\]
Since $p_i<\alpha+1$ or $\frac{2\alpha-p_i}{p_i-2}>1$, by taking $T>0$ sufficiently large, we see that $\||s|^{\frac{1}{\alpha}} u\|^\alpha_{L^{m_i}_t((T,t), L^{q_i}_x)}$ is small. This proves that the term $B$ is bounded for some $T>0$ large enough. Therefore,
\[
\|w\|_{S(L^2, I)} \leq C +\frac{1}{2}\|w\|_{S(L^2, I)} \quad \text{or} \quad \|w\|_{S(L^2, I)} \leq 2C.
\]
By letting $t$ tends to $+\infty$, we complete the proof.
\end{proof}
We are now able to prove Theorem $\ref{theorem scattering weighted space}$. The proof follows by a standard argument (see e.g. \cite{Cazenave} or \cite{Tao}).

\paragraph{\bf Proof of Theorem \ref{theorem scattering weighted space}.}
Let $u$ be the global solution to the defocusing (INLS). By the time reserval symmetry, we only consider the positive time. The Duhamel formula $(\ref{duhamel formula})$ implies
\[
e^{-it\Delta} u(t)= u_0  -i \int_0^t e^{-is\Delta} |x|^{-b}|u(s)|^\alpha u(s)ds. 
\]
Let $0<t_1<t_2<\infty$. By Strichartz estimates and Lemmas $\ref{lem scattering weighted space d geq 4}, \ref{lem scattering weighted space d=3}, \ref{lem scattering weighted space d=2}$,
\begin{align*}
\|e^{-it_2 \Delta} u(t_2) - e^{-it_1 \Delta} u(t_1) \|_{H^1_x} &= \Big\|\int_{t_1}^{t_2} e^{-is \Delta} |x|^{-b} |u(s)|^\alpha u(s) ds \Big\|_{H^1_x} \\
&\lesssim \||x|^{-b} |u|^\alpha u\|_{S'(L^2, (t_1, t_2))} + \|\nabla(|x|^{-b} |u|^\alpha u)\|_{S'(L^2, (t_1, t_2))} \\
&\lesssim \Big(\|u\|^\alpha_{L^{m_1}_t((t_1, t_2), L^{q_1}_x)} + \|u\|^\alpha_{L^{m_2}_t((t_1, t_2), L^{q_2}_x)}\Big) \|u\|_{S((t_1, t_2))}, 
\end{align*}
where $(p_i, q_i)\in S$ satisfy $p_i<2\alpha+2, q_i \in (2, 2^\star)$ and $m_i =\frac{\alpha p_i}{p_i -2}$ for $i=1,2$. By the same argument as in Proposition $\ref{prop global bound}$ and the global bound $\|u\|_{S(\R)}<\infty$, we see that
\[
\Big(\|u\|^\alpha_{L^{m_1}_t((t_1, t_2), L^{q_1}_x)} + \|u\|^\alpha_{L^{m_2}_t((t_1, t_2), L^{q_2}_x)}\Big) \|u\|_{S((t_1, t_2))} \rightarrow 0,
\]
as $t_1, t_2 \rightarrow +\infty$. This shows that $e^{-it\Delta} u(t)$ is a Cauchy sequence in $H^1(\R^d)$ as $t\rightarrow +\infty$. Therefore, there exists $u_0^+ \in H^1(\R^d)$ such that $e^{-it\Delta} u(t) \rightarrow u_0^+$ as $t\rightarrow +\infty$. Note that this convergence holds for $d, b$ and $\alpha$ as in Proposition $\ref{prop global bound}$. We now show that this scattering state $u_0^+$ belongs to $\Sigma$. To do so, we firstly observe that the operator $x+2it\nabla$ can be written as
\begin{align}
x+2it\nabla = e^{it\Delta} x e^{-it\Delta}. \label{pseudo conformal operator property}
\end{align}
Indeed, since $x+2it\nabla$ commutes with $i\partial_t +\Delta$, we see that if $u$ is a solution to the linear Schr\"odinger equation, then so is $(x+2it\nabla) u$. Thus, if we set $u(t)=e^{it\Delta} \varphi$, then 
\[
(x+2it\nabla) u(t) = e^{it\Delta} x \varphi.
\]
By setting $\varphi = e^{-it\Delta} \psi$, we see that
\[
(x+2it\nabla) \psi = e^{it\Delta} x e^{-it\Delta} \psi,
\]
which proves $(\ref{pseudo conformal operator property})$. Using the Duhamel formula $(\ref{duhamel formula weighted})$ and $(\ref{pseudo conformal operator property})$, we have
\[
x e^{-it\Delta} u(t) = xu_0 -i\int_0^t e^{-is\Delta} (x+2is\nabla) (|x|^{-b}|u(s)|^\alpha u(s)) ds.
\]
\indent \underline{\bf Case 1: $d\geq 4$.} By Strichartz estimates, Lemma $\ref{lem scattering weighted space d geq 4}$ and using the same argument as in Proposition $\ref{prop global bound weighted space}$, we see that
\begin{align*}
\|xe^{-t_2 \Delta} u(t_2) - xe^{-it_1 \Delta} u(t_1) \|_{L^2_x} &= \Big\|\int_{t_1}^{t_2} e^{-its \Delta} (x+2is\nabla)(|x|^{-b} |u(s)|^\alpha u(s)) ds \Big\|_{L^2_x} \\
&\lesssim \|(x+2is\nabla) (|x|^{-b}|u|^\alpha u)\|_{S'(L^2, (t_1, t_2))} \\
&\lesssim \|2|s| \nabla(|x|^{-b}|v|^\alpha v)\|_{S'(L^2, (t_1, t_2))} \\
&\lesssim \left(\|u\|^\alpha_{L^{m_1}_t((t_1, t_2), L^{q_1}_x)} + \|u\|^\alpha_{L^{m_2}_t((t_1, t_2), L^{q_2}_x)} \right) \|2|s| \nabla v\|_{S(L^2, (t_1, t_2))}  \\
&\lesssim \left(\|u\|^\alpha_{L^{m_1}_t((t_1, t_2), L^{q_1}_x)} + \|u\|^\alpha_{L^{m_2}_t((t_1, t_2), L^{q_2}_x)} \right) \|w\|_{S(L^2, (t_1, t_2))}, 
\end{align*}
where $(p_i, q_i)\in S$ satisfy $p_i<2\alpha+2, q_i \in (2, 2^\star)$ and $m_i =\frac{\alpha p_i}{p_i -2}$ for $i=1,2$. Arguing as in the proof of Proposition $\ref{prop global bound weighted space}$ and the global bound $\|w\|_{S(L^2, \R)}<\infty$, we see that
\[
\left(\|u\|^\alpha_{L^{m_1}_t((t_1, t_2), L^{q_1}_x)} + \|u\|^\alpha_{L^{m_2}_t((t_1, t_2), L^{q_2}_x)} \right) \|w\|_{S(L^2, (t_1, t_2))} \rightarrow 0,
\]
as $t_1, t_2 \rightarrow +\infty$. \newline
\indent \underline{\bf Case 2: $d=2,3$.} 
\begin{align*}
\|xe^{-t_2 \Delta} u(t_2) - xe^{-it_1 \Delta} u(t_1) \|_{L^2_x} &= \Big\|\int_{t_1}^{t_2} e^{-its \Delta} (x+2is\nabla)(|x|^{-b} |u(s)|^\alpha u(s)) ds \Big\|_{L^2_x} \\
&\lesssim \|(x+2is\nabla) (|x|^{-b}|u|^\alpha u)\|_{S'(L^2, (t_1, t_2))} \\
&\lesssim \|2|s| \nabla(|x|^{-b}|v|^\alpha v)\|_{S'(L^2, (t_1, t_2))} \\
&\lesssim \|2|s||x|^{-b} \nabla (|v|^\alpha v)\|_{S'(L^2, (t_1, t_2))} + \|2|s| |x|^{-b-1} |v|^\alpha v\|_{S'(L^2, (t_1, t_2))} \\
&=:A+B.
\end{align*}
For term $A$, we use  $(\ref{estimate weighted solution d=3}), (\ref{estimate weighted solution d=2})$ and the fact $|v|=|u|, 2|s||\nabla v| = |w|$ to have
\begin{align}
A &\lesssim \left(\| u\|^\alpha_{L^{m_1}_t((t_1, t_2), L^{q_1}_x)} + \|u\|^\alpha_{L^{m_2}_t((t_1, t_2), L^{q_2}_x)} \right) \|2|s|\nabla v\|_{S(L^2,(t_1, t_2))} \nonumber \\
&\lesssim \left(\| u\|^\alpha_{L^{m_1}_t((t_1, t_2), L^{q_1}_x)} + \|u\|^\alpha_{L^{m_2}_t((t_1, t_2), L^{q_2}_x)} \right) \|w\|_{S(L^2,(t_1, t_2))}, \label{term A}
\end{align}
for some $(p_i, q_i)\in S$ satisfy $p_i<2\alpha+2, q_i \in (2, 2^\star)$ and $m_i =\frac{\alpha p_i}{p_i-2}$ for $i=1,2$. Similarly, by Lemma $\ref{lem nonlinear estimate weighted solution}$,
\begin{align}
B 
\lesssim \left(\||s|^{\frac{1}{\alpha}} u\|^\alpha_{L^{m_1}_t((t_1, t_2), L^{q_1}_x)} + \||s|^{\frac{1}{\alpha}} u\|^\alpha_{L^{m_2}_t((t_1, t_2), L^{q_2}_x)} \right) \|u\|_{S(L^2,(t_1, t_2))}, \label{term B}
\end{align}
for some $(p_i, q_i)\in S$ satisfy $p_i<\alpha+1, q_i \in (2, 2^\star)$ and $m_i =\frac{\alpha p_i}{p_i-2}$ for $i=1,2$. By the same argument as in Case 2 of the proof of Proposition $\ref{prop global bound weighted space}$, we see that the right hand sides of $(\ref{term A})$ and $(\ref{term B})$ tend to 0 as $t_1, t_2 \rightarrow +\infty$. \newline
\indent In both cases, we show that $x e^{-it\Delta} u(t)$ is a Cauchy sequence in $L^2$ as $t\rightarrow +\infty$. We thus have $x u_0^+ \in L^2$ and so $u_0^+ \in \Sigma$. Moreover,
\[
u_0^+= u_0 -i \int_t^\infty e^{-is \Delta} |x|^{-b}|u(s)|^\alpha u(s)ds.
\] 
\indent By repeating the above estimates, we prove as well that
\[
\|e^{-it\Delta} u(t)-u_0^+\|_{\Sigma} \rightarrow 0,
\]
as $t\rightarrow +\infty$. The proof is complete.
\hfill $\Box$

\begin{remark}
We end this section by giving some comments on the scattering in $\Sigma$ for $\alpha \in (0,\alpha_\star)$. In this case, by Theorem $\ref{theorem decay property}$, we have the following decay of global solutions to the defocusing (INLS)
\begin{align}
\|u(t)\|_{L^q_x} \lesssim |t|^{-\frac{d(2b+d\alpha)}{4}\left(\frac{1}{2}-\frac{1}{q}\right)}, \label{decay low range}
\end{align}
for $q$ as in $(\ref{define q})$. Let us consider the easiest case $d\geq 4$. In order to obtain the global $H^1$-Strichartz bound on $u$ and the global $L^2$-Strichartz bound on $w$ (see Proposition $\ref{prop global bound}$ and Proposition $\ref{prop global bound weighted space}$), we need $\|u\|^\alpha_{L^m_t((T, t), L^q_x)}$ to be small as $T>0$ large enough, where $(p,q) \in S$ and $m=\frac{\alpha p}{p-2}$. This norm can be written as
\begin{align}
 \Big(\int_T^t \|u(s)\|_{L^q_x}^m ds \Big)^{\frac{\alpha}{m}} = \Big( \int_T^t \|u(s)\|_{L^q_x}^{\frac{\alpha p}{p-2}} ds \Big)^{\frac{p-2}{p}}. \label{norm low range}
\end{align}
Using $(\ref{decay low range})$, 
\[
\|u(s)\|_{L^q_x}^{\frac{\alpha p}{p-2}} \lesssim s^{-\frac{\alpha(2b+d\alpha)}{2(p-2)}}.
\]
To make the right hand side of $(\ref{norm low range})$ small, we need $\frac{\alpha(2b+d\alpha)}{2(p-2)}>1$ or equivalently $2p<4+\alpha(2b+d\alpha)$ hence
\begin{align}
\frac{d}{2}-\frac{d}{q} =\frac{2}{p}>\frac{4}{4+\alpha(2b+d\alpha)}. \label{condition low range}
\end{align}
Let us choose $q$ as in the proof of Lemma $\ref{lem scattering weighted space d geq 4}$, i.e.
\[
q=\frac{d(\alpha+2)}{d-b}+a\eps, \quad a\in \{\pm 1\},
\]
for some $\eps>0$ small enough. We see that $(\ref{condition low range})$ is equivalent to
\[
d[d^2 \alpha^3 + 4bd \alpha^2 + (4d-8+4b^2)\alpha + 8b-16] + a\eps(d-b)[4d-8+ d\alpha(2b+d\alpha)]>0.
\]
By taking $\eps>0$ small enough, it is enough to show $f(\alpha):=d^2 \alpha^3 + 4bd \alpha^2 + (4d-8+4b^2)\alpha + 8b-16>0$. Since $b\in (0,2)$, we see that $f(0)=8b-16<0$ and $f(\alpha_\star)=f\left(\frac{4-2b}{d}\right) = \frac{8(4-2b)}{d}>0$. Hence $f(\alpha)=0$ has a solution in $(0,\alpha_\star)$. Thus, the inequality $f(\alpha)>0$ holds true for a sub interval of $(0,\alpha_\star)$. By the same argument as for the case $\alpha \in [\alpha_\star, \alpha^\star)$, we can obtain a similar scattering result in $\Sigma$ for a certain range of $\alpha \in (0,\alpha_\star)$. 
\end{remark}

\appendix
\section{Local $L^2$-Strichartz bound of weighted solutions}
\begin{lemma} \label{lem local bound weighted space}
Let $d, b$ and $\alpha$ be as in Theorem $\ref{theorem local existence}$. Let $u_0 \in \Sigma$ and $u$ be the corresponding global solutions to the defocusing \emph{(INLS)}. Set 
\[
w(t)=(x+2it\nabla)u(t).
\]
Then $w \in L^p_{\emph{loc}}(\R, L^q(\R^d))$ for any Schr\"odinger admissible pair $(p,q)$.
\end{lemma}
\begin{proof}
We follow the argument of Tao, Visan, and Zhang \cite{TVZ}. For simplifying the notation, we denote $H(t)=x+2it\nabla$. We will show that $\|Hu\|_{S(L^2,I)} <\infty$ for any finite time interval $I$ of $\R$. By the time reversal symmetry, we may assume $I=[0,T]$.  We split $I$ into a finite number of subintervals $I_j=[t_j, t_{j+1}]$ such that $|I_j| <\eps$ for some small constant $\eps>0$ to be chosen later. \newline
\indent \underline{\bf Case 1: $d\geq 4, b\in (0,2)$ or $d=3, b\in (0,1)$ and $\alpha \in (0, \alpha^\star)$.} By $(\ref{pseudo conformal operator property})$, we see that on each interval $I_j$,
\[
H(t) u(t) = e^{i(t-t_j)\Delta} H(t_j) u(t_j) - i\int_{t_j}^t e^{i(t-s)\Delta} H(s)(|x|^{-b}|u(s)|^\alpha u(s)) ds.
\]
Let $v$ be as in $(\ref{define v})$. By Strichartz estimates and \eqref{non-est-2-d-geq3} and that $|v(s)|=|u(s)|, 2|s||\nabla v(s)| = |H(s)u(s)|$, we have
\begin{align*}
\|Hu\|_{S(L^2, I_j)} &\lesssim \|H(t_j) u(t_j)\|_{L^2_x} + \|H(s)(|x|^{-b}|u|^\alpha u)\|_{S'(L^2, I_j)} \\
&\lesssim \|H(t_j) u(t_j)\|_{L^2_x}+ \|2|s| \nabla (|x|^{-b} |v|^\alpha v)\|_{S'(L^2,I_j)} \\
&\lesssim \|H(t_j) u(t_j)\|_{L^2_x}+ \left(|I_j|^{\theta_1} + |I_j|^{\theta_2} \right) \|\nabla u\|^\alpha_{S(L^2, I_j)} \|2|s|\nabla v\|_{S(L^2, I_j)} \\
&\lesssim \|H(t_j) u(t_j)\|_{L^2_x}+ \left(\eps^{\theta_1} + \eps^{\theta_2} \right) \|u\|^\alpha_{S(I_j)} \|Hu\|_{S(L^2, I_j)}.
\end{align*}
Since $\|u\|_{S(\R)}<\infty$, by choosing $\eps>0$ small enough depending on $T, \|u\|_{S(\R)}$, we get
\[
\|Hu\|_{S(L^2, I_j)} \lesssim \|H(t_j)u(t_j)\|_{L^2_x}.
\]
By induction, we have for each $j$,
\[
\|Hu\|_{S(L^2, I_j)} \lesssim \|H(0)u(0)\|_{L^2_x}=\|xu_0\|_{L^2_x}.
\]
Summing these estimates over all subintervals $I_j$, we obtain
\[
\|Hu\|_{S(L^2, I)} <\infty. 
\]
\indent \underline{\bf Case 2: $d=3, b\in \left[1,\frac{3}{2}\right)$ and $\alpha \in \left(0, \frac{6-4b}{2b-1}\right)$ or $d=2, b\in (0,1)$ and $\alpha \in (0, \alpha^\star)$.} By Strichartz estimates, \eqref{non-est-2-d3}, \eqref{non-est-2-d2} and keeping in mind that $|v|=|u|, 2|s||\nabla v| = |Hu|$, we bound
\begin{align*}
\|Hu\|_{S(L^2, I_j)} &\lesssim \|H(t_j) u(t_j)\|_{L^2_x} + \|H(s)(|x|^{-b}|u|^\alpha u)\|_{S'(L^2, I_j)} \\
&\lesssim \|H(t_j) u(t_j)\|_{L^2_x}+ \|2|s| \nabla (|x|^{-b} |v|^\alpha v)\|_{S'(L^2,I_j)} \\
&\lesssim \|H(t_j) u(t_j)\|_{L^2_x}+ \left(|I_j|^{\theta_1} + |I_j|^{\theta_2} \right) \|\scal{\nabla} u\|^\alpha_{S(L^2, I_j)} \|2|s|\nabla v\|_{S(L^2, I_j)} \\
& \mathrel{\phantom{\lesssim \|H(t_j) u(t_j)\|_{L^2_x} }} + \left(|I_j|^{1+\theta_1} + |I_j|^{1+\theta_2} \right) \|\scal{\nabla} u\|^\alpha_{S(L^2, I_j)} \|u\|_{S(L^2, I_j)} \\
&\lesssim \|H(t_j) u(t_j)\|_{L^2_x}+ \left(\eps^{\theta_1} + \eps^{\theta_2} \right) \|u\|^\alpha_{S(I_j)} \|Hu\|_{S(L^2, I_j)} \\
&  \mathrel{\phantom{\lesssim\|H(t_j) u(t_j)\|_{L^2_x} }} + \left(\eps^{1+\theta_1} + \eps^{1+\theta_2} \right) \|u\|^{\alpha+1}_{S(I_j)}. 
\end{align*}
Since $\|u\|_{S(\R)}<\infty$, by choosing $\eps>0$ small enough depending on $T, \|u\|_{S(\R)}$, we get
\[
\|Hu\|_{S(L^2, I_j)} \leq C\|H(t_j)u(t_j)\|_{L^2_x} + C,
\]
for some constant $C>0$ independent of $T$. By induction, we get for each $j$,
\[
\|Hu\|_{S(L^2,I_j)} \leq C\|x u_0\|_{L^2_x} + C.
\] 
Summing over all subintervals $I_j$, we complete the proof. 
\end{proof}

\section*{Acknowledgement}
This work was supported in part by the Labex CEMPI (ANR-11-LABX-0007-01). The author would like to express his deep gratitude to his wife - Uyen Cong for her encouragement and support. He also would like to thank the reviewer for his/her helpful comments and suggestions. 

\begin{bibdiv}
\begin{biblist}
		
	\bib{Berge}{article}{
		author={Berg\'{e}, L.},
		title={Soliton stability versus collapse},
		journal={Phys. Rev. E (3)},
		volume={62},
		date={2000},
		number={3},
		pages={R3071--R3074},
		issn={1539-3755},
	}
	
	\bib{BF}{article}{
		author={De Bouard, A.},
		author={Fukuizumi, R.},
		title={Stability of standing waves for nonlinear Schr\"{o}dinger equations
			with inhomogeneous nonlinearities},
		journal={Ann. Henri Poincar\'{e}},
		volume={6},
		date={2005},
		number={6},
		pages={1157--1177},
		issn={1424-0637},
	}
	
	\bib{Campos}{article}{
		author={Campos, L. },
		title={Scattering of radial solutions to the inhomogeneous nonlinear Schr\"odinger equation},
		journal={Nonlinear Anal.},
		volume={202},
		date={2021},
		pages={112118},
	}

	\bib{Cazenave}{book}{
		author={Cazenave, T.},
		title={Semilinear Schr\"{o}dinger equations},
		series={Courant Lecture Notes in Mathematics},
		volume={10},
		publisher={New York University, Courant Institute of Mathematical
			Sciences, New York; American Mathematical Society, Providence, RI},
		date={2003},
		pages={xiv+323},
		isbn={0-8218-3399-5},
	}
	
	\bib{CW}{article}{
		author={Cazenave, T.},
		author={Weissler, F. B.},
		title={Rapidly decaying solutions of the nonlinear Schr\"{o}dinger equation},
		journal={Comm. Math. Phys.},
		volume={147},
		date={1992},
		number={1},
		pages={75--100},
		issn={0010-3616},
	}
	
	\bib{Chen}{article}{
		author={Chen, J.},
		title={On a class of nonlinear inhomogeneous Schr\"{o}dinger equation},
		journal={J. Appl. Math. Comput.},
		volume={32},
		date={2010},
		number={1},
		pages={237--253},
		issn={1598-5865},
	}
	
	\bib{CG}{article}{
		author={Chen, J.},
		author={Guo, B.},
		title={Sharp global existence and blowing up results for inhomogeneous
			Schr\"{o}dinger equations},
		journal={Discrete Contin. Dyn. Syst. Ser. B},
		volume={8},
		date={2007},
		number={2},
		pages={357--367},
		issn={1531-3492},
	}
	
	\bib{CG-JEE}{article}{
		author={Combet, V.},
		author={Genoud, F.},
		title={Classification of minimal mass blow-up solutions for an $L^2$
			critical inhomogeneous NLS},
		journal={J. Evol. Equ.},
		volume={16},
		date={2016},
		number={2},
		pages={483--500},
		issn={1424-3199},
	}
		
	\bib{Dinh}{article}{
		author={Dinh, V. D.},
		title={Blowup of $H^1$ solutions for a class of the focusing
			inhomogeneous nonlinear Schr\"{o}dinger equation},
		journal={Nonlinear Anal.},
		volume={174},
		date={2018},
		pages={169--188},
		issn={0362-546X},
	}
	
	\bib{Dinh-JEE}{article}{
		author={Dinh, V. D. },
		title={Energy scattering for a class of the defocusing inhomogeneous
			nonlinear Schr\"{o}dinger equation},
		journal={J. Evol. Equ.},
		volume={19},
		date={2019},
		number={2},
		pages={411--434},
		issn={1424-3199},
	}
	
	\bib{Dinh-arXiv}{article}{
		author={V. D. Dinh},
		title={Energy scattering for a class of inhomogeneous nonlinear Schrödinger equation in two dimensions},
		journal={preprint},
		eprint={https://arxiv.org/abs/1908.02987},
	}

	\bib{Farah}{article}{
		author={Farah, L. G.},
		title={Global well-posedness and blow-up on the energy space for the
			inhomogeneous nonlinear Schr\"{o}dinger equation},
		journal={J. Evol. Equ.},
		volume={16},
		date={2016},
		number={1},
		pages={193--208},
		issn={1424-3199},
	}
	
	\bib{FG-JDE}{article}{
		author={Farah, L. G.},
		author={Guzm\'{a}n, C. M.},
		title={Scattering for the radial 3D cubic focusing inhomogeneous
			nonlinear Schr\"{o}dinger equation},
		journal={J. Differential Equations},
		volume={262},
		date={2017},
		number={8},
		pages={4175--4231},
		issn={0022-0396},
	}
	
	\bib{FG-BBMS}{article}{
		author={Farah , L. G.},
		author={Guzm\'{a}n, C. M.},
		title={Scattering for the radial focusing inhomogeneous NLS equation in
			higher dimensions},
		journal={Bull. Braz. Math. Soc. (N.S.)},
		volume={51},
		date={2020},
		number={2},
		pages={449--512},
		issn={1678-7544},
	}

	\bib{FW}{article}{
		author={Fibich, G.},
		author={Wang, X.},
		title={Stability of solitary waves for nonlinear Schr\"{o}dinger equations
			with inhomogeneous nonlinearities},
		journal={Phys. D},
		volume={175},
		date={2003},
		number={1-2},
		pages={96--108},
		issn={0167-2789},
	}

	\bib{FO}{article}{
		author={Fukuizumi, R.},
		author={Ohta, M.},
		title={Instability of standing waves for nonlinear Schr\"{o}dinger equations
			with inhomogeneous nonlinearities},
		journal={J. Math. Kyoto Univ.},
		volume={45},
		date={2005},
		number={1},
		pages={145--158},
		issn={0023-608X},
	}
	
	\bib{GS}{article}{
		author={Genoud, F.},
		author={Stuart, C. A.},
		title={Schr\"{o}dinger equations with a spatially decaying nonlinearity:
			existence and stability of standing waves},
		journal={Discrete Contin. Dyn. Syst.},
		volume={21},
		date={2008},
		number={1},
		pages={137--186},
		issn={1078-0947},
	}
	
	\bib{Genoud}{article}{
		author={Genoud, F.},
		title={An inhomogeneous, $L^2$-critical, nonlinear Schr\"{o}dinger equation},
		journal={Z. Anal. Anwend.},
		volume={31},
		date={2012},
		number={3},
		pages={283--290},
		issn={0232-2064},
	}
	
	\bib{Genoud-Thesis}{article}{
		author={Genoud , F.},
		title={Th\'eorie de bifurcation et de stabilit\'e pour une \'equation de Sch\"odinger avec une non-lin\'earit\'e compacte},
		journal={PhD Thesis No. 4233, EPFL 2008},
	}
	
	\bib{Grill}{article}{
		author={Gill, T. S.},
		title={Optical guiding of laser beam in nonuniform plasma},
		journal={Pramana},
		volume={55},
		date={2000},
		pages={835-842},
		issn={0973-7111},
	}

	\bib{GV}{article}{
		author={Ginibre, J.},
		author={Velo, G.},
		title={On a class of nonlinear Schr\"{o}dinger equations. I. The Cauchy
			problem, general case},
		journal={J. Functional Analysis},
		volume={32},
		date={1979},
		number={1},
		pages={1--32},
		issn={0022-1236},
	}
	
	\bib{Guzman}{article}{
		author={Guzm\'{a}n, C. M.},
		title={On well posedness for the inhomogeneous nonlinear Schr\"{o}dinger
			equation},
		journal={Nonlinear Anal. Real World Appl.},
		volume={37},
		date={2017},
		pages={249--286},
		issn={1468-1218},
	}
	
	\bib{LT}{article}{
		author = {Liu, C. S.},
		author = {Tripathi, V. K.},
		title = {Laser guiding in an axially nonuniform plasma channel},
		journal = {Physics of Plasmas},
		volume = {1},
		number = {9},
		pages = {3100-3103},
		year = {1994},
		}

	\bib{LWW}{article}{
		author={Liu, Y.},
		author={Wang, X.},
		author={Wang, K.},
		title={Instability of standing waves of the Schr\"{o}dinger equation with
			inhomogeneous nonlinearity},
		journal={Trans. Amer. Math. Soc.},
		volume={358},
		date={2006},
		number={5},
		pages={2105--2122},
		issn={0002-9947},
	}
	\bib{LS}{article}{
		author={Lee, Y.},
		author={Seo, I.},
		title={The Cauchy problem for the energy-critical inhomogeneous nonlinear Schr\"odinger equation},
		journal={preprint},
		eprint={https://arxiv.org/abs/1911.01112v3},
	}
	
	\bib{KLS}{article}{
		author={Kim, J.},
		author={Lee, Y.},
		author={Seo, I.},
		title={On well-posedness for the inhomogeneous nonlinear Schr\"odinger equation in the critical case},
		journal={preprint},
		eprint={https://arxiv.org/abs/1907.11871v2},
	}

	\bib{Merle}{article}{
		author={Merle, F.},
		title={Nonexistence of minimal blow-up solutions of equations
			$iu_t=-\Delta u-k(x)|u|^{4/N}u$ in ${\bf R}^N$},
		language={English, with English and French summaries},
		journal={Ann. Inst. H. Poincar\'{e} Phys. Th\'{e}or.},
		volume={64},
		date={1996},
		number={1},
		pages={33--85},
		issn={0246-0211},
	}
	
	\bib{RS}{article}{
		author={Rapha\"{e}l, P.},
		author={Szeftel, J.},
		title={Existence and uniqueness of minimal blow-up solutions to an
			inhomogeneous mass critical NLS},
		journal={J. Amer. Math. Soc.},
		volume={24},
		date={2011},
		number={2},
		pages={471--546},
		issn={0894-0347},
	}

	\bib{Tao}{book}{
		author={Tao, T.},
		title={Nonlinear dispersive equations},
		series={CBMS Regional Conference Series in Mathematics},
		volume={106},
		note={Local and global analysis},
		publisher={Published for the Conference Board of the Mathematical
			Sciences, Washington, DC; by the American Mathematical Society,
			Providence, RI},
		date={2006},
		pages={xvi+373},
		isbn={0-8218-4143-2},
	}
	
	\bib{TVZ}{article}{
		author={Tao, T.},
		author={Visan, M.},
		author={Zhang, X.},
		title={The nonlinear Schr\"{o}dinger equation with combined power-type
			nonlinearities},
		journal={Comm. Partial Differential Equations},
		volume={32},
		date={2007},
		number={7-9},
		pages={1281--1343},
		issn={0360-5302},
	}
	
	\bib{TM}{article}{
		author={Towers, I.},
		author={Malomed, B. A.},
		title={Stable $(2+1)$-dimensional solitons in a layered medium with
			sign-alternating Kerr nonlinearity},
		journal={J. Opt. Soc. Amer. B Opt. Phys.},
		volume={19},
		date={2002},
		number={3},
		pages={537--543},
		issn={0740-3224},
	}
	
	\bib{Tsutsumi}{article}{
		author={Tsutsumi, Y.},
		title={Scattering problem for nonlinear Schr\"{o}dinger equations},
		language={English, with French summary},
		journal={Ann. Inst. H. Poincar\'{e} Phys. Th\'{e}or.},
		volume={43},
		date={1985},
		number={3},
		pages={321--347},
		issn={0246-0211},
	}

	\bib{Toland}{article}{
		author={Toland, J. F.},
		title={Uniqueness of positive solutions of some semilinear
			Sturm-Liouville problems on the half line},
		journal={Proc. Roy. Soc. Edinburgh Sect. A},
		volume={97},
		date={1984},
		pages={259--263},
		issn={0308-2105},
		review={\MR{751198}},
		doi={10.1017/S0308210500032042},
	}
	
	\bib{XZ}{article}{
		author={Xu, C.},
		author={Zhao, T.}
		title={A remark on the scattering theory for the 2D radial focusing INLS},
		journal={preprint},
		eprint={https://arxiv.org/abs/1908.00743},
	}

	\bib{Yanagida}{article}{
		author={Yanagida, E.},
		title={Uniqueness of positive radial solutions of $\Delta
			u+g(r)u+h(r)u^p=0$ in ${\bf R}^n$},
		journal={Arch. Rational Mech. Anal.},
		volume={115},
		date={1991},
		number={3},
		pages={257--274},
		issn={0003-9527},
	}

	\bib{Zhu}{article}{
		author={Zhu, S.},
		title={Blow-up solutions for the inhomogeneous Schr\"{o}dinger equation with
			$L^2$ supercritical nonlinearity},
		journal={J. Math. Anal. Appl.},
		volume={409},
		date={2014},
		number={2},
		pages={760--776},
		issn={0022-247X},
	}
		
\end{biblist}
\end{bibdiv}

\end{document}